\documentclass[abstracton]{scrartcl} 

\usepackage{amsfonts}
\usepackage{amsmath}
\usepackage{amssymb}
\usepackage{xcolor}
\usepackage{dsfont}

\usepackage{graphics}
\usepackage{graphicx}
\usepackage{epstopdf}
\usepackage{epsf}
\usepackage[T1]{fontenc}

\usepackage{subcaption}
\usepackage{float}

\usepackage{latexsym}
\usepackage{oldgerm}
\usepackage{dsfont}
\usepackage{amscd, amsthm,enumerate,verbatim,calc}
\usepackage{authblk}
\usepackage{enumitem}
\usepackage{thm-restate}

\newtheorem{theorem}{Theorem}
\newtheorem{Lemma}[theorem]{Lemma}
\newtheorem{Claim}[theorem]{Claim}

\newtheorem{corollary}[theorem]{Corollary}

\newtheorem{observation}[theorem]{Observation}

\newcommand{\w}{\color{black}}

\usepackage[colorlinks,breaklinks,backref]{hyperref}
\usepackage{hyperref}
\usepackage{backref}
\usepackage{todonotes}

\usepackage{thmtools}
\usepackage{thm-restate}
\usepackage{cleveref}

\begin{document}

\title {On $3$-colorability of $(claw, diamond)$-free graphs}

\author[1]{Nadzieja Hodur}
\author[1]{Monika Pil\'sniak}
\author[1]{Magdalena Prorok}
\author[1, 2]{Ingo Schiermeyer}
\affil[1]{\normalsize AGH University of Krakow, al. Mickiewicza 30, 30-059 Krak\'ow, Poland}
\affil[2]{\normalsize TU Bergakademie Freiberg, 09596 Freiberg, Germany}

\date{\today}
 
\maketitle
\begin{abstract}
 The $3$-colorability problem is a well-known NP-complete problem and it remains NP-complete for $(claw, diamond, K_4)$-free graphs. Recently, $3$-colorability has been also considered for $(claw, N_{1,1,1})$-free graphs.
  Here, a generalised net $N_{i, j, k}$ is the graph obtained by identifying each vertex of a triangle with an endvertex of one of three vetex-disjont paths of lengths $i, j, k$. 
  
 We study the class of $(claw, diamond, N_{i, j, k})$-free graphs for $(i, j, k) \in \{(1, 1, 3),$ $ (1, 2, 2), $ $(2, 2, 2) \}$. We show that these graphs are $3$-colorable or contain a $K_4$ or belong to some well-defined class of non $3$-colorable graphs. Moreover, we prove that there are only finitely many non $3$-colorable $N_{1, 2, k}$-free graphs for any $k \geq 2$, but there exist infinitely many non $3$-colorable $N_{i, j, k}$-free graphs for any 
 $2 \leq i \leq j \leq k.$
\end{abstract}

\noindent
{\w Keywords: $4$-colorable graphs, forbidden induced subgraphs, perfect graphs, complexity}            
Math. Subj. Class.: { 05C15,  05C17, 68Q25, 68W40. }                      

\section{Motivation and Introduction}\label{sec:intro}

We consider finite, simple, and  undirected graphs.
For terminology and notations not defined here, we refer to~\cite{BM08}. In this paper we will use the following notation. A \textit{diamond} is a graph $K_4$ with one edge deleted; a \textit{bull} is a triangle with two pendant edges attached to two of the vertices of the triangle; and a \textit{net} is a triangle with additional pendant edge at each vertex. A \textit{generalised net} $N_{i, j, k}$ has its pendant edges subdivided $i - 1, j-1, k-1$ times respectively (see Fig. \ref{pic:net}.)

    \begin{figure}[htb]
      \begin{subfigure}{0.3\textwidth}
    \centering  
    \includegraphics[width=0.8\linewidth]{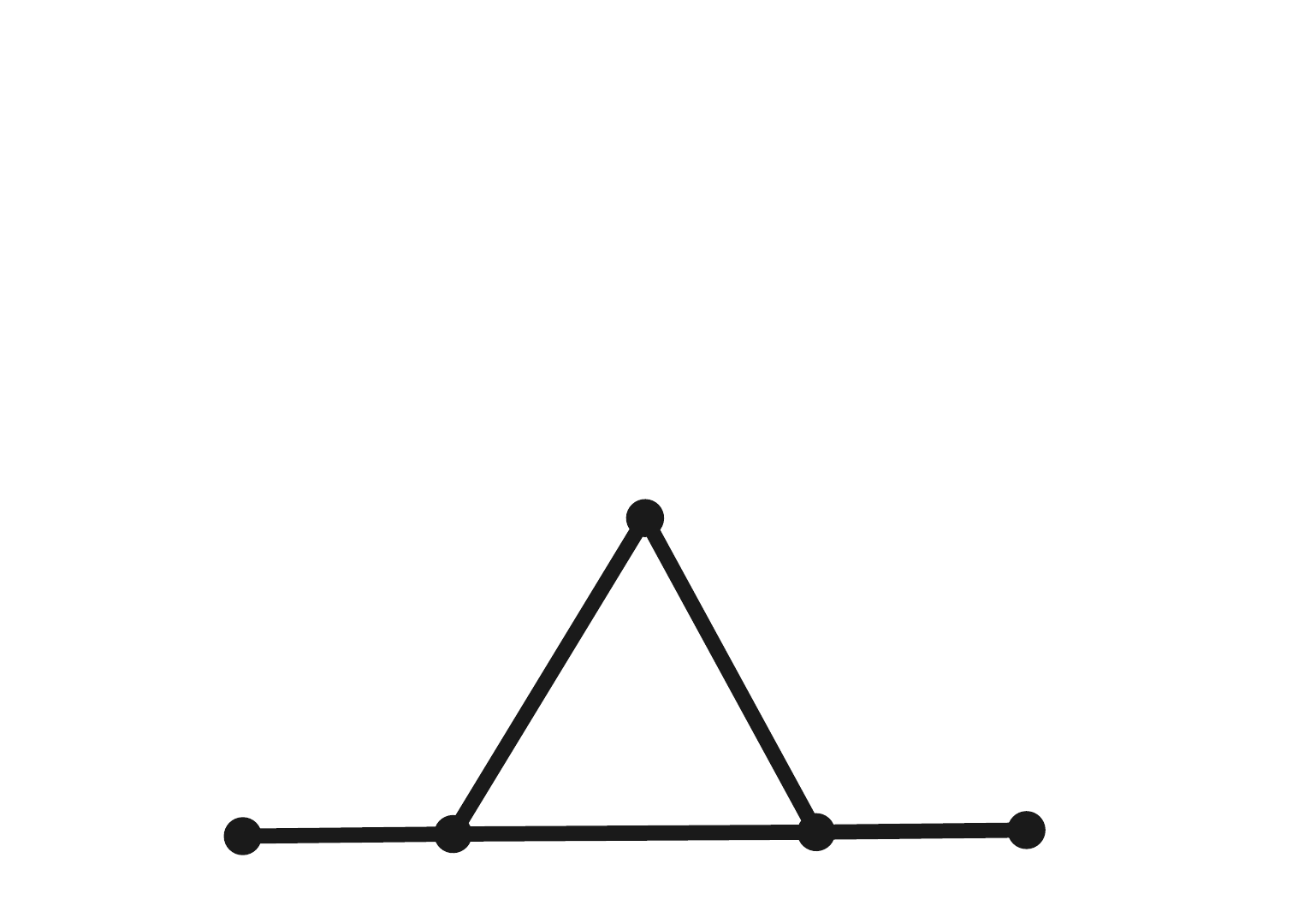}
    \caption{\label{pic3}}
    \end{subfigure}
    \centering    
    \begin{subfigure}{0.3\textwidth}
    \centering  
    \includegraphics[width=0.8\linewidth]{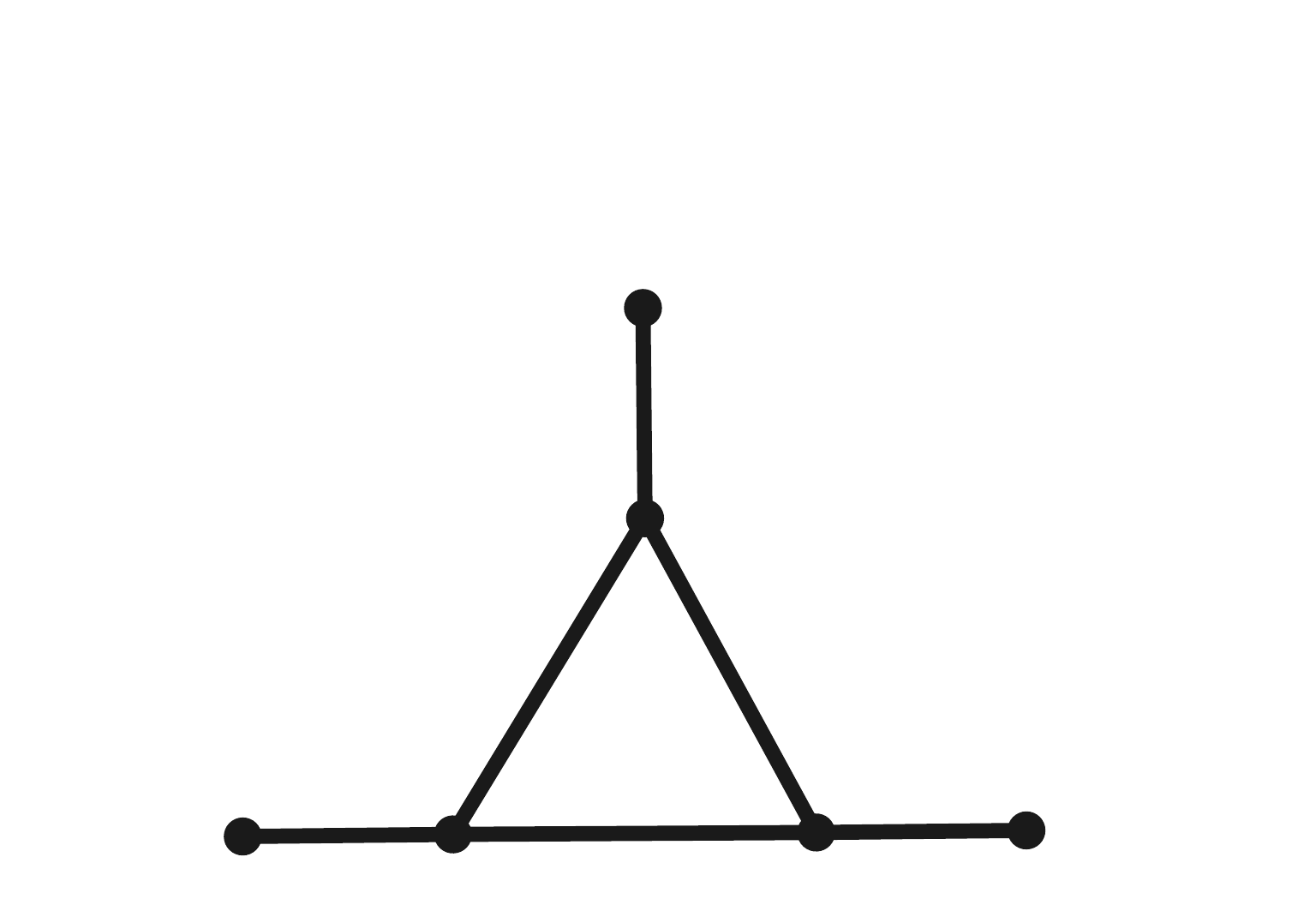} 
    \caption{\label{pic1}}
    \end{subfigure}
    \begin{subfigure}{0.3\textwidth}
    \centering  
    \includegraphics[width=0.8\linewidth]{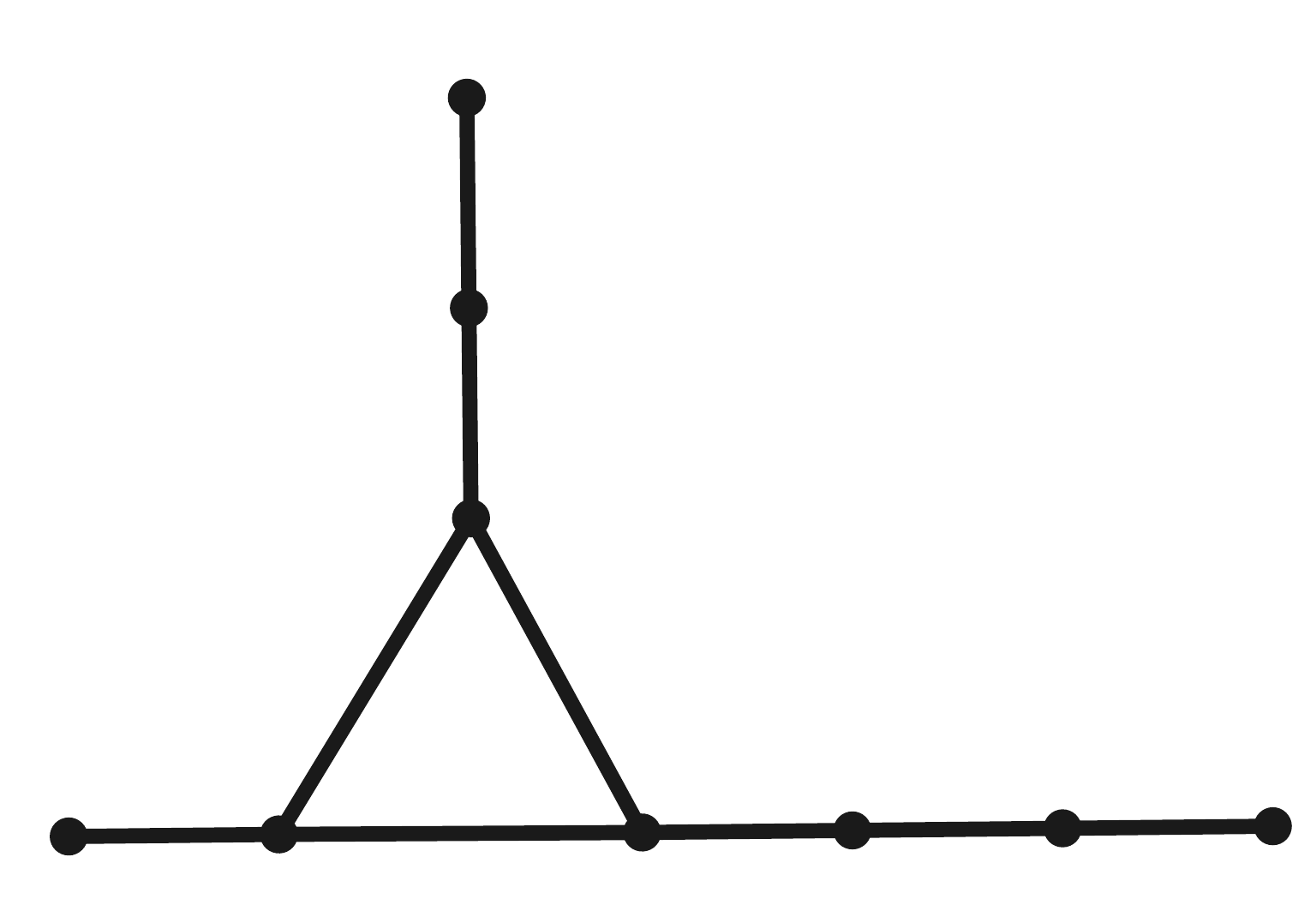}
    \caption{\label{pic2}}
    \end{subfigure}
  
    \caption{The graphs called the bull, the net and $N_{1,2,3}$ respectively\label{pic:net}.}
\end{figure} 

It is well-known that the $k$-colorability problem remains NP-complete in classes of graphs defined by one forbidden induced subgraph $F$, unless this subgraph is a linear forest. 
{If $F$ is not a linear forest then $k$-Colorability remains $NP$-complete. The cases when $F$ is a triangle or a claw have been studied very intensively. 

The $3$-colorability problem was also stated for classes defined by pairs of forbidden induced subgraphs. In 2004, Randerath obtained structural results, listing all possible pairs of forbidden induced subgraphs, which can be saturated pairs implying $3$-colorability of a graph (\cite{Ran1}). There are also many algorithmic results, providing polynomial algorithms deciding whether or not a graph with a forbidden pair is $k$-colorable. For instance, for pairs $(claw, P_6)$, $(claw, K_1 + 2K_2)$  (\cite{Ran}), and quite recently for pairs $(bull, S_{1,1,2})$ and $(bull, S_{1,2,2})$ (\cite{kkol}, \cite{stara}). Here, $S_{i,j,k}$ is a tree consisting  three paths of length ${i, j, k}$ joining by identify one end-vertex of every path. In particular a $claw$ is a graph $S_{1,1,1}$. Moreover, given a family $\cal{H}$ of graphs and a graph $G$, we say that $G$ is \emph{$\cal{H}$-free}
if $G$ contains no graph from $\cal{H}$ as an induced subgraph. In this context, the graphs of $\cal{H}$ are referred to as \emph{forbidden induced subgraphs}. Interestingly, the thorough structural analysis given by \cite{stara} provides that the only $(bull, S_{1,1,2})$-free (and also $(bull, S_{1,2,2})$-free) non-$3$-colorable graphs are build of diamonds, while in this paper we will see that the main families of $(claw, diamond)$-free graphs contain $bull$.

For more results concerning complexity of coloring problems in classes defined by forbidden induced subgraphs, see \cite{kral} and surveys \cite{surv}, \cite{RS04}.

Our research has been motivated by a recent result about $3$-colorability of $(claw,net)$-free graphs \cite{clawdiamondnet}. First observe a necessary and useful fact for $3$-colorable graphs: 

Let $G$ be a $3$-colorable graph containing an induced diamond. Then in any $3$-coloring of $G$ the two nonadjacent vertices in an induced diamond receive the same color.

Based on this observation in \cite{clawdiamondnet} the class of so called {\it spindle graphs} (cf. also \cite{stara}) has been introduced, which all contain induced diamonds and which are all non $3$-colorable.

\begin{theorem} \label{thm::N111} \cite{clawdiamondnet}
	Let $G$ be a connected, $(claw, net)$-free graph. Then 
	\begin{enumerate}
		\item $G$ contains $K_4$ or
		\item $G$ contains $W_5$ or
        \item $G$ contains a spindle subgraph or
		\item $G$ is $3$-colorable. 
	\end{enumerate}
\end{theorem}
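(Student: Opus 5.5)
Assume $G$ is connected, $(claw, net)$-free, and contains no $K_4$, no $W_5$ and no spindle subgraph. The plan is to show that $G$ is $3$-colorable.

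\textbf{Reduction to diamond-free graphs.} First dispose of the case in which $G$ has an induced diamond.
\begin{itemize}
\item In any $3$-coloring, the two nonadjacent vertices of a diamond get the same color, so I will identify them. This yields a smaller graph $G'$ with $G'$ $3$-colorable iff $G$ is.
\item Claw-freeness and $K_4$-freeness restrict how a diamond attaches to the rest of $G$. Each vertex of the diamond has only a few further neighbours, and these are constrained.
\item Chains of diamonds glued along their tips are exactly what builds the spindle configurations. So if an identification ever creates a triangle-closing edge, a $K_4$, or a $W_5$ that was not present in $G$, I will trace this back to a spindle subgraph of $G$.
\item Maximal "strings" of diamonds then contract without creating $K_4$/$W_5$/spindle obstructions. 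This reduces the problem to a $(claw, diamond, net)$-free, $K_4$-free, $W_5$-free graph, or at least to one whose remaining diamonds are harmless.
\end{itemize}
I expect this step to be the main obstacle: one must verify that identification preserves claw-freeness and net-freeness, or at least does not create any obstruction. The delicate part is the case analysis of how two diamonds can share vertices.

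\textbf{The diamond-free case.} In a $(claw, diamond)$-free graph, every edge lies in at most one triangle. With $K_4$ excluded, the neighbourhood of every vertex is a disjoint union of at most two cliques, each of size at most $2$. Hence $\Delta(G)\le 4$, and $G$ is the line graph of a triangle-free multigraph-free graph $H$; more precisely, $G=L(H)$ with $H$ triangle-free. $3$-coloring $G$ then amounts to $3$-edge-coloring $H$, where $H$ has maximum degree at most $3$. By Kőnig's theorem, $H$ is class 1 if bipartite, so it remains to handle $H$ with odd cycles.

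Net-freeness of $G$ translates into a condition on $H$: no vertex of degree $3$ lies on a triangle-free configuration whose three edges each extend, i.e.\ $H$ contains no subdivided claw $S_{2,2,2}$-type structure of the corresponding form. $W_5$-freeness is automatic here.

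\textbf{Coloring.} Next, for each odd cycle $C$ in $H$ of length at least $5$, the pendant edges at the degree-$3$ vertices of $C$ cannot all extend further. This forces $H$ to be a short odd cycle with pendant edges, or a graph of small order. I will then check directly that such graphs are $3$-edge-colorable. The only cubic class-2 obstruction excluded by these degree and size constraints is the Petersen graph, which contains the forbidden structure.

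Alternatively, if the line-graph structure becomes awkward, I will $3$-color greedily along a BFS order from a vertex. I will use the fact that $\Delta\le4$, and that each vertex has at most two earlier neighbours in a clique, forming one triangle. I will resolve conflicts via Kempe-chain switches, arguing that a failing Kempe chain yields a net.
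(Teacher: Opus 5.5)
\paragraph{Assessment.} The paper does not prove this statement. It is quoted from \cite{clawdiamondnet}, and only its diamond-free special case (Corollary~\ref{cor::N111}) is used later, so there is no in-paper proof to compare against. Your second step is essentially that corollary, and it is the easy part. The substance of the theorem lies in graphs that contain induced diamonds. That is the only place where $W_5$ or a spindle can occur, since both contain diamonds and a $(claw,diamond,K_4)$-free graph has $\Delta\le 4$. Here your proposal has a genuine gap.

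\paragraph{The identification step fails.} Merging the two tips of a diamond does preserve $3$-colorability, but it does not preserve the hypotheses your coloring step needs. Take a diamond with middle edge $xy$ and tips $u,v$, and add a pendant neighbour $a$ at $u$ and a pendant neighbour $b$ at $v$. This graph is claw-free, net-free and $K_4$-free. After merging $u,v$ into a vertex $w$, the vertices $w,a,b,x$ induce a claw centred at $w$. So the reduced graph need not be claw-free, its degrees can exceed $4$, and the representation $G'=L(H)$ with $H$ subcubic and triangle-free is lost. You also give no argument that net-freeness survives. You flag this yourself as ``the main obstacle'', but nothing in the proposal overcomes it. The sentence ``I will trace this back to a spindle subgraph of $G$'' is an assertion rather than an argument, and it never uses what a spindle actually is.

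\paragraph{What is missing.} You need a direct structural treatment of diamonds in $G$ itself, not contraction. A natural start is that in a $(claw,K_4)$-free graph every neighbourhood is $(K_3,3K_1)$-free. Such a neighbourhood has at most $5$ vertices, with exactly $5$ only when it is a $C_5$, which gives an induced $W_5$. So under your assumptions $\Delta(G)\le 4$, and a middle vertex of a diamond has neighbourhood $P_3$, $P_4$ or $C_4$. Triangles linked through shared edges (that is, through diamonds) then form rigid blocks, each with a unique $3$-coloring (if any) up to permuting colors. You must then use claw- and net-freeness to control how these blocks attach to each other and to the rest of $G$. Either the forced colorings clash, which is the kind of obstruction a spindle is meant to record, or they extend to a $3$-coloring of $G$.

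\paragraph{A minor point.} Your diamond-free step is also only gestured at: the ``short odd cycle with pendant edges'' claim, the Petersen remark and the Kempe-chain fallback are not arguments. A clean version goes as follows. After Reductions 1--3, apply Hall's theorem to the second neighbours of a degree-$3$ vertex of $H$ and do a short case check. This produces an $S_{2,2,2}$ (hence a net in $G$) unless the component is $K_{3,3}$ or $K_{3,3}$ minus an edge. Therefore every component of the reduced $H$ either has maximum degree at most $2$ or is bipartite, and is $3$-edge-colorable.
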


Hence, if we forbid additionally diamonds as induced subgraphs, we obtain the following corollary.

\begin{corollary} \label{cor::N111}
	Let $G$ be a connected, $(claw, diamond, net)$-free graph. Then 
	\begin{enumerate}
		\item $G$ contains $K_4$ or
		\item $G$ is $3$-colorable. 
	\end{enumerate}
\end{corollary}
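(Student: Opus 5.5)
The corollary should follow directly from Theorem~\ref{thm::N111}, once we check that in a diamond-free graph outcomes 2 and 3 of that theorem reduce to outcome 1 or cannot occur. Let $G$ be a connected $(claw, diamond, net)$-free graph. In particular $G$ is $(claw, net)$-free, so Theorem~\ref{thm::N111} gives one of four alternatives: $G$ contains $K_4$, $G$ contains $W_5$, $G$ contains a spindle subgraph, or $G$ is $3$-colorable. The first and last alternatives are exactly the two conclusions of the corollary. It therefore remains to show that the middle two are impossible when $G$ is also diamond-free, unless $G$ already contains a $K_4$.

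\emph{The $W_5$ case.} I will read ``contains'' in Theorem~\ref{thm::N111} as ``contains as an induced subgraph''. Let the hub be $h$ and the rim be the $5$-cycle $c_1c_2c_3c_4c_5$. Then $\{h,c_1,c_2,c_3\}$ induces $K_4$ minus the edge $c_1c_3$, because $c_1c_3$ is a chord of the induced rim. So this set induces a diamond, which is a contradiction.

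\emph{The spindle case.} Here I will use the fact recalled before Theorem~\ref{thm::N111}: spindle graphs, as introduced in \cite{clawdiamondnet}, are built from induced diamonds, and every spindle graph contains one. Hence if $G$ contains a spindle subgraph induced, $G$ contains an induced diamond, again a contradiction.

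\emph{The main obstacle.} The only delicate point is the meaning of ``contains'' in Theorem~\ref{thm::N111}, and I will handle it as follows.
\begin{itemize}
\item If $W_5$ or the spindle may occur only as a (not necessarily induced) subgraph, the argument must be adjusted. Take the vertex set of the subgraph. Any extra edge between two rim vertices, or between two nonadjacent vertices of a diamond, creates either a $K_4$ or a smaller structure that still contains an induced diamond.
\item For the wheel this check is short. Suppose $h$ is adjacent to all of $c_1,\dots,c_5$. Among the triples $h,c_i,c_{i+1},c_{i+2}$, either some pair $c_i,c_{i+2}$ is nonadjacent, which gives an induced diamond, or all the chords $c_ic_{i+2}$ are present. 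In the latter case $\{h,c_1,c_2,c_3\}$ is a $K_4$.
\end{itemize}
In every case, then, the middle two alternatives lead either to conclusion 1 or to a contradiction with diamond-freeness, which proves the corollary.
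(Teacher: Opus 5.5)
Your proof is correct and follows the paper's route: the paper derives this corollary directly from Theorem~\ref{thm::N111} by noting that $W_5$ (its hub together with three consecutive rim vertices) and every spindle graph contain a diamond, so in a diamond-free graph only the $K_4$ and $3$-colorable outcomes remain. Your extra handling of non-induced containment is sound and can be stated in one line: any four vertices that span a diamond as a (not necessarily induced) subgraph induce either a diamond or a $K_4$.
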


Since spindle graphs have turned out to be a useful certificate for a graph not to be $3$-colorable, a natural question arises:

\medskip

\textit{If a graph $G$ does not contain induced diamonds, what structural arguments can be given to describe that 
it is not $3$-colorable?}

\medskip

{The $3$-Colorability problem remains $NP$-complete for $(claw, diamond, K_4)$-free graphs (\cite{kral}).}
{It has been also shown by \cite{LozPur} that $3$-colorability remains $NP$-complete for $(claw, diamond,$ $K_4,$ $C_4, C_5, \ldots, C_k)$-free graphs for any $k \geq 4$. Using Beineke's Theorem, which characterizes line-graphs in terms of nine forbidden induced subgraphs (cf. [BoM08]), it follows that 
$(claw, diamond, K_4)$-free graphs are line graphs of subcubic triangle-free graphs. Moreover, in graph coloring theory the following  relationship for graphs and their line-graphs is well-known:}

\begin{observation}
Let $H$ be a graph and $G = L(H)$ be its line graph. Then for every integer $k \geq 1$ it holds: $G$ is $k$-colorable if and only if $H$ is $k$-edge colorable.
\end{observation}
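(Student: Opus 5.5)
The statement follows directly from the definitions, so I will simply identify colorings of $G=L(H)$ with edge colorings of $H$. Recall that the vertex set of $L(H)$ is $E(H)$, and two vertices of $L(H)$ are adjacent exactly when the corresponding edges of $H$ share an endpoint. A (proper) $k$-coloring of $G$ is a map $c\colon V(G)\to\{1,\dots,k\}$ with $c(x)\neq c(y)$ whenever $xy\in E(G)$. A (proper) $k$-edge coloring of $H$ is a map $c'\colon E(H)\to\{1,\dots,k\}$ with $c'(e)\neq c'(f)$ whenever $e\neq f$ share an endpoint.

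The key step is that the identity map $V(G)=E(H)$ carries one notion onto the other. A map $c\colon E(H)\to\{1,\dots,k\}$ is a proper vertex coloring of $L(H)$ iff every pair of adjacent vertices of $L(H)$ gets distinct colors. That holds iff every pair of distinct edges of $H$ sharing an endpoint gets distinct colors, which is precisely the condition for a proper edge coloring of $H$. So the two sets of colorings coincide, and both directions of the equivalence follow at once.

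The only point requiring care is a degenerate convention. Since the paper considers simple graphs, there are no loops, and no two distinct edges have both endpoints in common. Hence adjacency in $L(H)$ is exactly "distinct and sharing an endpoint", with no multiplicities to track. There is no real obstacle here. The statement is included because, combined with Beineke's characterization, it translates $3$-colorability of $(claw, diamond, K_4)$-free graphs into $3$-edge-colorability of subcubic triangle-free graphs.
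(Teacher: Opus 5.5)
Your proposal is correct: identifying proper vertex colorings of $L(H)$ with proper edge colorings of $H$ via $V(L(H))=E(H)$ is the standard definitional argument. The paper gives no proof of its own and simply cites the relationship as well known, so your argument supplies exactly the justification it leaves implicit.
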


The class we are dealing with in this paper has been already studied by Munaro (\cite{linegraphs}) resulting with theorems concerning independent sets and Hamiltonian problems. Our main motivation is an observation used by  Munaro, providing that $(claw, diamond)$-free graphs are line graphs of subcubic triangle-free graphs.

\begin{theorem}\cite{linegraphs}\label{linegraphs}
    The following statements are equivalent, for any graph $G$:
    \begin{enumerate}
        \item $G$ is a $(K_4, claw, diamond)$-free graph.
        \item $G$ is a line graph of a subcubic triangle-free graph.
        \item $G$ is a $(1, 1)$-interval graph such that there do not exist four vertices sharing the same interval and there do not exist three vertices such that any two of them share a different interval.
    \end{enumerate}
\end{theorem}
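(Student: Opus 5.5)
The plan is to prove the cycle of implications (1)$\Rightarrow$(2)$\Rightarrow$(1) and (2)$\Leftrightarrow$(3). The main tool is Beineke's characterization of line graphs by nine forbidden induced subgraphs.

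\emph{(1)$\Rightarrow$(2).} Let $G$ be $(K_4, claw, diamond)$-free. Every one of Beineke's nine forbidden graphs other than the claw contains an induced diamond. So $G$ avoids all nine and is a line graph, $G=L(H)$. We may assume $H$ has no isolated vertices. When $G$ is connected and is not $K_3$, the Whitney isomorphism theorem lets us choose $H$ without the $K_3$/$K_{1,3}$ ambiguity. If a component of $G$ is a triangle, we pick the root graph $K_{1,3}$ rather than $K_3$.

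\begin{itemize}
\item A vertex of degree at least $4$ in $H$ gives four pairwise adjacent edges, hence a $K_4$ in $G$. So $H$ is subcubic.
\item Suppose $H$ contains a triangle $abc$ lying in a component of $H$ other than $K_3$. Then some edge $e$ leaves the triangle, say at $a$. The edges $ab, ac, bc, e$ induce a diamond in $G$: $e$ is adjacent to $ab$ and $ac$ but not to $bc$. This is a contradiction, so $H$ can be taken triangle-free.
\end{itemize}

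\emph{(2)$\Rightarrow$(1).} Let $G=L(H)$ with $H$ subcubic and triangle-free. Line graphs are claw-free. A $K_4$ in $L(H)$ comes from four pairwise intersecting edges. Such edges form either a star, which needs degree $4$, or a triangle, which only supplies three edges; neither is available. A diamond in $L(H)$ needs two edges each meeting both ends of a third edge $uv$, while not meeting each other. In a triangle-free graph this forces either a common endpoint of degree $4$ or a triangle, again impossible.

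\emph{(2)$\Leftrightarrow$(3).} Here I use the definition of a $(1,1)$-interval (line-representation) graph from Munaro's paper: each vertex is assigned at most two "intervals" (cliques), and adjacency means sharing one. This is essentially Krausz's clique-cover characterization of line graphs. We have $G=L(H)$ exactly when $G$ admits an edge-clique cover in which every vertex lies in at most two cliques. Take the cliques to be the stars at the vertices of $H$.

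\begin{itemize}
\item "No four vertices share an interval" translates to every star having size at most $3$, i.e.\ $H$ subcubic.
\item "No three vertices pairwise sharing different intervals" translates to the absence of three edges pairwise meeting at three distinct vertices, i.e.\ no triangle in $H$.
\end{itemize}

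For the converse, build $H$ from the representation: its vertices are the intervals, plus one dummy vertex for each graph vertex that lies in only one interval. Each vertex of $G$ becomes an edge joining its two intervals.

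The main obstacle is the bookkeeping around small exceptions: the $K_3$ versus $K_{1,3}$ ambiguity of the root graph, and making precise the definition of $(1,1)$-interval graphs so that the translation is exact. I would handle these by choosing the root graph componentwise, and by stating the interval definition explicitly before the equivalence.
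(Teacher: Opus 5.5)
The paper gives no proof of this theorem. It is quoted from Munaro, and the only argument in the text is the remark that (1)$\Rightarrow$(2) follows from Beineke's theorem. Your treatment of (1)$\Leftrightarrow$(2) follows that route and is sound.

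The diamond step in (2)$\Rightarrow$(1) is worded loosely. Cleanly: let $p,q$ be the two adjacent degree-$3$ vertices of the diamond and $r,s$ the other two. Then $\{p,q,r\}$ and $\{p,q,s\}$ are each a star or a triangle in $H$. If both were stars, both would be centred at the common end of $p$ and $q$, making $r$ and $s$ adjacent. So $H$ has a triangle.

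\textbf{The gap is in (3)$\Rightarrow$(2).} You turn each vertex of $G$ into the edge joining its two intervals. Whenever two vertices share \emph{both} their intervals, this produces parallel edges, so your $H$ is in general a multigraph. Neither condition in (3) excludes this.

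Concretely, take your own definition and the diamond with non-adjacent vertices $r,s$. Represent it by $p,q\mapsto\{I,J\}$, $r\mapsto\{I\}$, $s\mapsto\{J\}$. Then:
\begin{itemize}
\item adjacency is exactly ``sharing an interval'';
\item no interval contains four vertices;
\item no three vertices pairwise share three different intervals.
\end{itemize}
Yet the diamond violates (1). Your construction returns a multigraph with a double edge $IJ$ whose line graph is the diamond. So, as you have set things up, the implication is false, not merely unproved.

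Your version of Krausz's theorem has the same omission: it drops the requirement that the cliques be edge-disjoint. For example, $K_5-e$ on $\{1,\dots,5\}$ (missing edge $12$) is covered by the cliques $\{1,3,4,5\}$ and $\{2,3,4,5\}$, with every vertex in at most two of them. But $K_5-e$ is not a line graph.

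\textbf{How to repair it.} You need the condition that two distinct vertices share at most one interval; equivalently, the interval-cliques are edge-disjoint, as in Krausz. Either extract this from Munaro's actual definition of $(1,1)$-interval graphs, or state it explicitly as part of (3). With it, $H$ is simple, and your translation of the two conditions into ``subcubic'' and ``triangle-free'' goes through. In (2)$\Rightarrow$(3), the stars of a simple $H$ satisfy this condition automatically.

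A minor point: if a vertex may receive no interval at all, it needs two dummy vertices in $H$, not one.
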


Thus, instead of $3$-colorability of $(claw, diamond)$-free graphs we will be working with an equivalent problem, that is, $3$-edge-colorability of subcubic triangle-free graphs. Note that a subproblem of the last problem is recognizing
 {\it snarks}, that is,  simple, connected, bridgeless cubic graphs with girth at least five and chromatic index equal to four. 

It is true, since in the class of $3$-regular graphs contracting triangles and diamonds does not change edge-coloring properties. Not surprisingly, it was shown by Král’, Kratochvíl, Tuza, and Woeginger, that in the class considered by Munaro the $3$-Colorability problem is NP-complete (\cite{kral}). But which part of the class makes this problem hard? We are trying to answer this question, considering several subclasses. Interestingly, it seems that all hard-to-deal $(claw, diamond)$-free graphs contain an induced generalised \textit{net} $N_{i, j, k}$.  We will see that after excluding induced net, it is possible to easily determine, whether or not a graph of the considered class is $3$-colorable. In the Section 2.1 we will state and prove that if a $(claw, diamond)$-free graph does not contain an induced $N_{1,1,3}$, then it contains the complete graph $K_4$ or the exceptional graph $B_{10}$ (See Figure \ref{pic:L(B_1)}), or it is $3$-colorable. In Sections 2.2 and 2.3 similar theorems will be stated for $(claw, diamond, N_{2, 2, 2})$-free graphs. Here, the list of exceptional graphs depends on whether or not a graph contains an induced cycle $C_5$. In Section 3 we gather conclusions and ask some further questions.

\hspace{1cm}




Let us consider a connected, $(claw, diamond)$-free graph $G$. In the proofs of our theorems we will also
 assume without loss of generality that $G$ is $K_4$-free. Then, by Theorem \ref{linegraphs} we have that $G = L(H)$, where $H$ is a subcubic triangle-free graph. Of course, a proper edge-coloring of $H$ corresponds to proper vertex-coloring of $G$. Thus, we will work with simple, triangle-free graphs $H$ with maximum degree bounded by $3$, and we will look for their proper edge-colorings. We will apply the following useful reductions for edge-colorings to these graphs.  
 
 \hspace{0,3cm}
\begin{enumerate}
	\item []\label{red1} \textbf{Reduction 1.} If there is a vertex $v \in V(H)$ such that $d(v) = 1$, then $H$ is 3-edge-colorable if and only if $H - v$ is 3-edge-colorable. So we can assume $\delta(H)\geq 2$.

	\item []\label{red2} \textbf{Reduction 2.} If we have two adjacent vertices $u, v \in V(H)$ such that $d(u)=d(v)=2$, then $H$ is 3-edge-colorable if and only if $H - u - v$ is 3-edge-colorable. So we can assume that no two vertices of degree $2$ are adjacent. 

	\item []\label{red3} \textbf{Reduction 3.} Let $C=w_1w_2w_3w_4$ be an induced $4$-cycle in $H$, where $d(w_1)=d(w_3) = 2$. Then $H$ is 3-colorable if and only if $H - \{w_1, w_2, w_3, w_4\}$ is 3-edge-colorable. So we can assume $H$ has no induced $4$-cycle with two independent vertices of degree~$2$.

    \item []\label{red4} \textbf{Reduction 4.} $H$ does not contain cut-edges. Otherwise its line graph $G=L(H)$ would contain a cut-vertex and could be reduced.
\end{enumerate}

 \hspace{0,3cm}
 
The following easy observations characterize graphs which are necessarily non-$3$-edge-colorable. Let us recall that the \textit{matching number} $\alpha'(H)$ of the graph $H$ is a maximal cardinality of a set of independent edges in $H$. We say that $H$ is {\it overfull}, if $|E(H)| > \alpha'(H)\Delta(H)$.

\begin{observation}\label{easyfact}
	If a graph $H$ is overfull, then $H$ is of Vizing class 2, so $\chi'(H)= \Delta(H)+1$.	
\end{observation}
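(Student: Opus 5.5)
The statement follows from a counting argument: in any proper edge-coloring, each color class is a matching. I will argue by contradiction, assuming that $H$ is overfull and that $\chi'(H) \le \Delta(H)$.

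By definition of the chromatic index, $E(H)$ then splits into at most $\Delta(H)$ color classes $M_1,\dots,M_{\Delta(H)}$. Each class is a matching, so $|M_i| \le \alpha'(H)$ for every $i$. Summing over the classes gives
\[
|E(H)| = \sum_{i=1}^{\Delta(H)} |M_i| \le \alpha'(H)\,\Delta(H).
\]
This contradicts the overfull inequality $|E(H)| > \alpha'(H)\Delta(H)$. Hence $\chi'(H) \ge \Delta(H)+1$.

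For the matching upper bound, Vizing's theorem gives $\chi'(H) \le \Delta(H)+1$ for simple graphs. Therefore $\chi'(H) = \Delta(H)+1$, which means $H$ is of Vizing class 2.

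There is no real obstacle in this argument. The only point needing care is that $\alpha'(H)$ is the maximum, not merely maximal, cardinality of a matching; this is what makes the bound $|M_i|\le\alpha'(H)$ valid. In the paper's setting ($H$ subcubic with $\Delta(H)=3$), the statement says that $|E(H)|>3\alpha'(H)$ rules out a $3$-edge-coloring of $H$, and therefore rules out a $3$-coloring of $G=L(H)$.
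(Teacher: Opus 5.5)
Your argument is correct and matches the paper's intended reasoning. The paper states this observation without proof; the bound $|E(H)| \le \chi'(H)\,\alpha'(H)$ (each color class is a matching), combined with Vizing's upper bound $\chi'(H)\le\Delta(H)+1$ for simple graphs, is exactly the justification it relies on, for instance when it applies the observation to $H^*(e)$.
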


\begin{observation}
	Let $H$ be a cubic bipartite graph and $e \in E(H)$. Let $H^*(e)$ be a graph obtained from $H$ by subdividing the edge $e$. Then $\chi'(H^*(e))=4$.
\end{observation}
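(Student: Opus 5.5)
We have to show that if $H$ is cubic bipartite and $e=uv\in E(H)$, then $H^*(e)$, obtained by replacing $e$ by a path $uwv$, satisfies $\chi'(H^*(e))=4$. The plan is to get the lower bound $\chi'\ge 4$ from a parity argument on the bipartition. The upper bound $\chi'\le 4$ then comes from Vizing's theorem, since $\Delta(H^*(e))=3$. As a side remark, Observation~\ref{easyfact} is not directly applicable here: $H^*(e)$ has $n+1$ vertices with $n$ even, so it has odd order, but it is generally not overfull in the strict sense unless $H$ is small. The parity argument is therefore the natural route.

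Let $(A,B)$ be the bipartition of $H$, with $u\in A$ and $v\in B$. Since $H$ is cubic, $|A|=|B|$. Suppose, for contradiction, that $c$ is a proper $3$-edge-coloring of $H^*(e)$, and fix a color $\alpha$. Let $M_\alpha$ be the set of edges colored $\alpha$; it is a matching.

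Every vertex other than $w$ has degree $3$, so it is incident to exactly one edge of each color. Hence $M_\alpha$ covers every vertex of $A\cup B$. The vertex $w$ has degree $2$, so it misses at least one color; choose $\alpha$ to be a color missing at $w$.

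Now count edges. Every edge of $H^*(e)$ not incident to $w$ joins $A$ to $B$, as in $H$. The two edges at $w$ are $wu$ (with $u\in A$) and $wv$ (with $v\in B$), and neither is in $M_\alpha$. So $M_\alpha$ consists only of $A$--$B$ edges, and it covers all of $A\cup B$. In particular it is a perfect matching between $A$ and $B$ inside $H-e$.

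The key step is to obtain the contradiction from the colors actually present at $w$. Since $w$ has degree $2$ and uses two distinct colors, there are two colors $\beta\neq\gamma$, one on $wu$ and one on $wv$. Take $\beta=c(wu)$. The matching $M_\beta$ covers every vertex of $A\cup B$, and it also covers $w$ through $wu$. Its edges at $u$ and $w$ are the single edge $wu$, while every other vertex of $A\cup B$ is matched by an $A$--$B$ edge (note $wv\notin M_\beta$). So $A\setminus\{u\}$ is perfectly matched to $B$, which gives $|A|-1=|B|$. This contradicts $|A|=|B|$.

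Hence $\chi'(H^*(e))\ge 4$, and together with Vizing's theorem we get $\chi'(H^*(e))=4$. The only point needing care is the edge count in $M_\beta$: one must check that the two edges at $w$ receive different colors, so that $M_\beta$ contains exactly one of them. I expect no genuine obstacle beyond this.
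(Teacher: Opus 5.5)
Your main argument is correct, but it differs from the paper's route, and your side remark about Observation~\ref{easyfact} is false. Let $|V(H)|=2k$. Then $H^*(e)$ has $2k+1$ vertices, $3k+1$ edges and maximum degree $3$. Because its order is odd, every matching has at most $k$ edges, so $\alpha'(H^*(e))\le k$ and $|E(H^*(e))|=3k+1>3k\ge\alpha'(H^*(e))\,\Delta(H^*(e))$. Hence $H^*(e)$ is overfull for every cubic $H$, not only when $H$ is small. This holds both in the paper's sense and in the usual sense $|E|>\Delta\lfloor |V|/2\rfloor$. It is exactly the paper's proof: three matchings of size at most $k$ cannot cover $3k+1$ edges.

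Your route instead fixes the colour class $M_\beta$ containing $wu$ and uses the bipartition to force $|A|-1=|B|$. This is valid. The checks that $wv\notin M_\beta$ and that $u$ is saturated only by $wu$ are exactly what is needed. The argument is also self-contained, since it needs no appeal to the matching number. However, it spends the bipartite hypothesis where none is required. $M_\beta$ is a matching saturating all $2k+1$ vertices of $H^*(e)$, which is already impossible by parity, so the same contradiction holds for any cubic $H$. The paper's counting argument makes this generality visible; there, bipartiteness enters only through the unnecessary remark $\chi'(H)=3$. Your preliminary step with a colour $\alpha$ missing at $w$ leads nowhere and can be dropped.
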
	
\begin{proof}
	Since $H$ is cubic, we have $|V(H)|=2k$ for some $k$. Moreover, $\alpha'(H) = k$ and we have $\chi'(H)=3$. For $H^*(e)$ we have also $\alpha'(H^*(e))=k$, but $|E(H)| = 3k+1$ and by Observation \ref{easyfact} we have $\chi(H^*(e))=4$.
\end{proof}

A simple example of a non-$3$-edge-colorable graph obtained this way is $K_{3,3}^*$ - complete bipartite graph $K_{3,3}$ with one edge (call it $e$) subdivided. Trying to $3$-color edges of the graph $K_{3,3}^*$ we obtain that both "halves" of $e$ must receive the same color. We can use this observation to obtain a family $D_{6i+1}$ of non-$3$-colorable graphs, joining $i$ copies of $K_{3,3}$ as depicted in Figure \ref{pic:thm::N222-pic1}. In a similar way (see Figure \ref{pic:thm::N222-pic2}) we obtain a family $D_{6i+5}$. Line graphs (which are obviously non-$3$-vertex-colorable) of those families are called $B_{9i+1}$ and $B_{9i+7}$, respectively - see Figures \ref{pic:L(B_1)} and \ref{pic:L(B7)}. 

\begin{figure}[htb]
    \centering    
    \begin{subfigure}{0.4\textwidth}
    \centering  
    \includegraphics[width=1\linewidth]{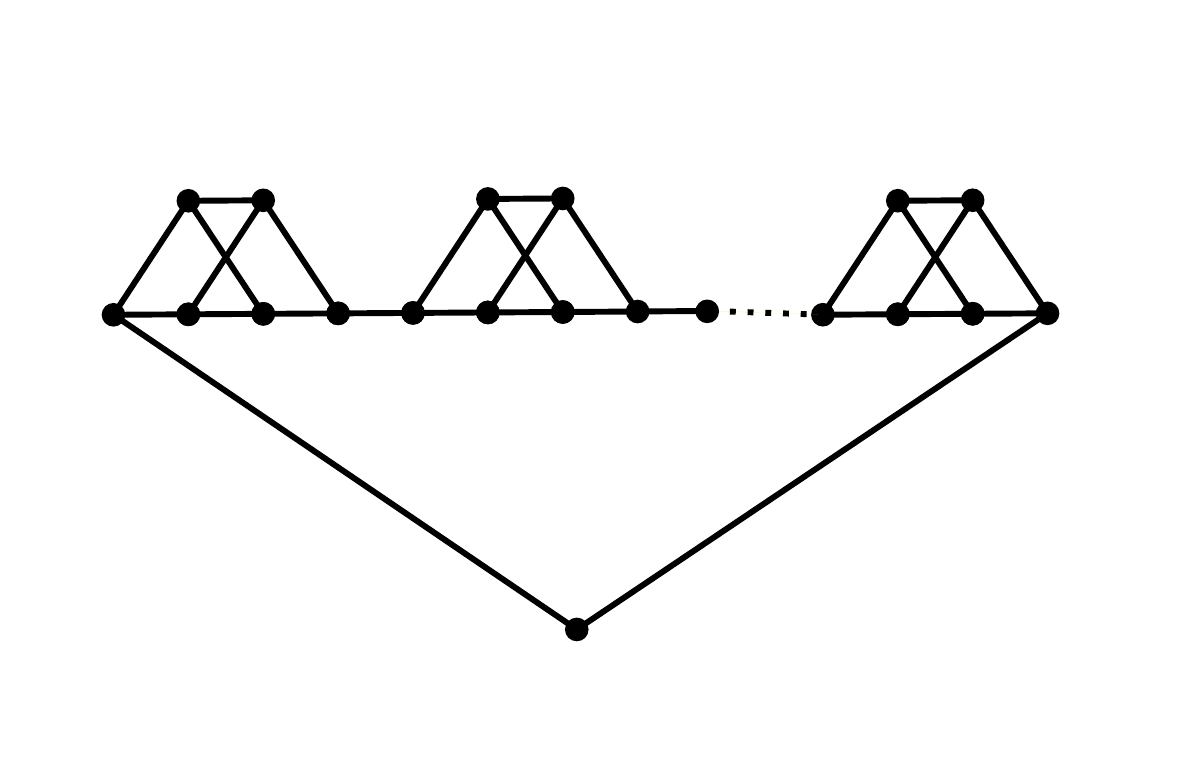} 
    \caption{$D_{6i+1}$ \label{pic:thm::N222-pic1}}
    \end{subfigure}
    \hspace{1cm}
    \begin{subfigure}{0.4\textwidth}
    \centering  
    \includegraphics[width=1\linewidth]{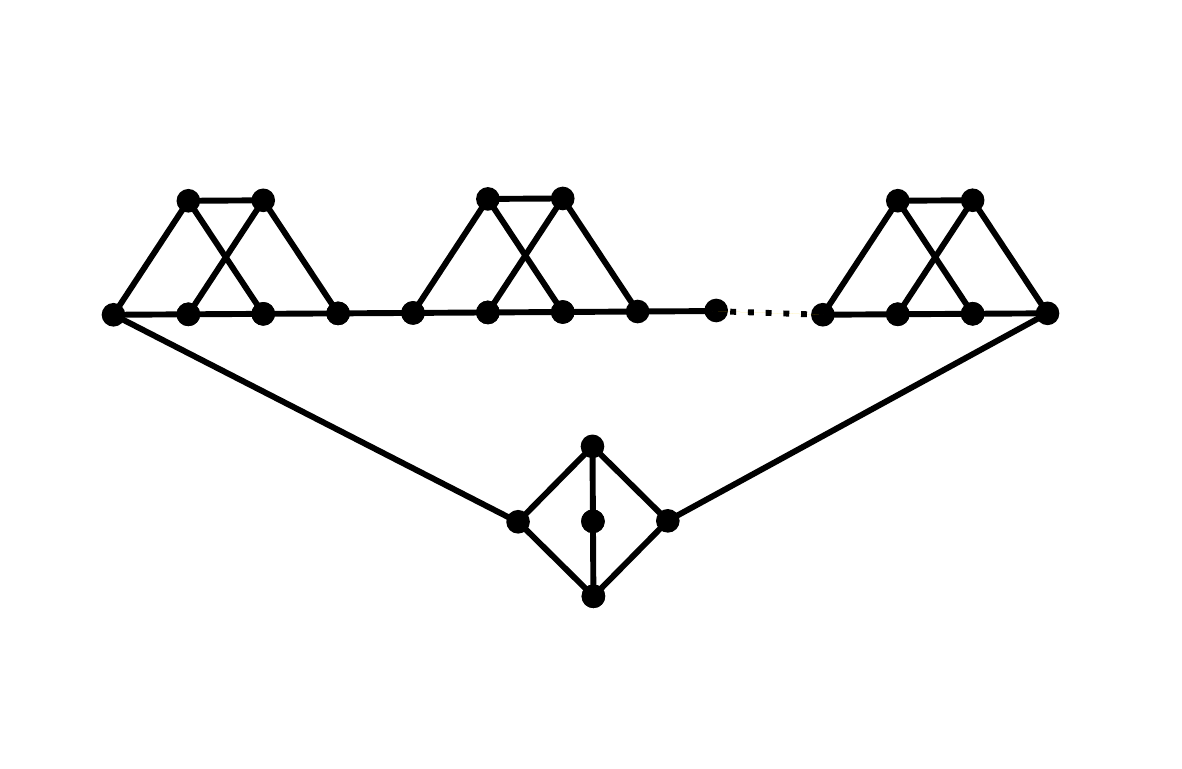}
    \caption{$D_{6i+5}$\label{pic:thm::N222-pic2}}
    \end{subfigure}

    \centering    
    \begin{subfigure}{0.4\textwidth}
    \centering  
    \includegraphics[width=1\linewidth]{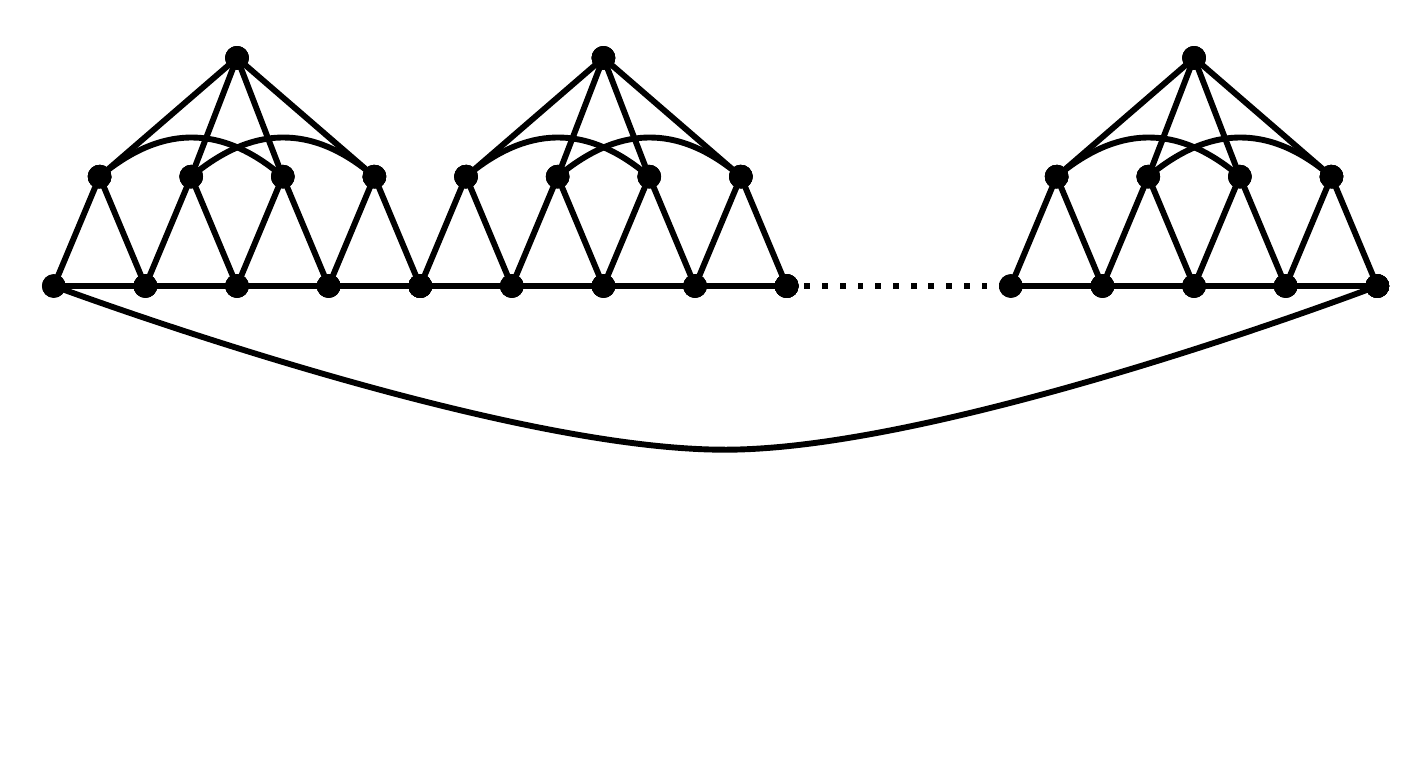} 
    \caption{$B_{9i+1}$\label{pic:L(B_1)}}
    \end{subfigure}
    \hspace{1cm}
    \begin{subfigure}{0.4\textwidth}
    \centering  
    \includegraphics[width=1\linewidth]{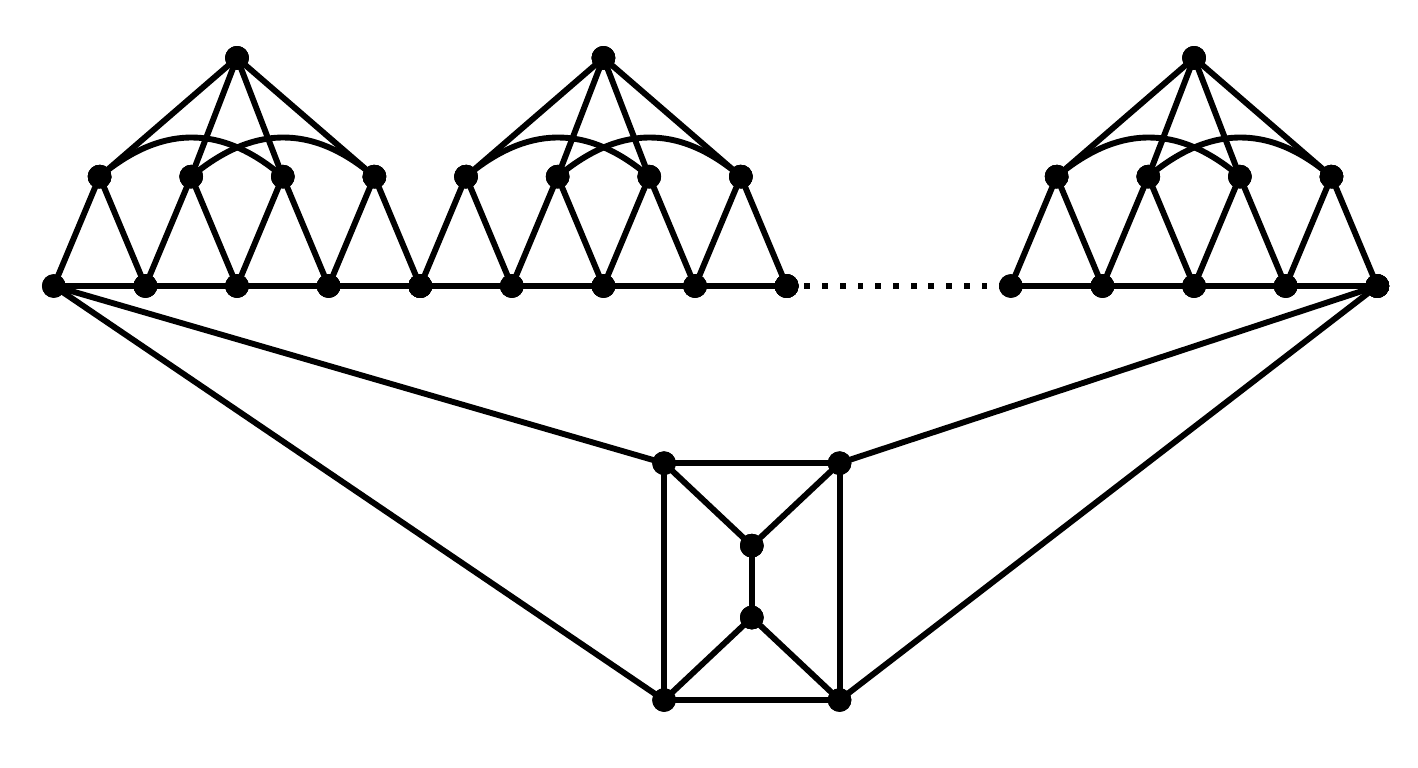}
    \caption{$B_{9i+7}$\label{pic:L(B7)}}
    \end{subfigure}
    \caption{The exceptional non-$3$-colorable, $(claw, diamond)$-free graphs and their base graphs.}
 
\end{figure}

\section{Main results}

\subsection{The (claw, diamond, N$_{1, 1, 3}$)-free graphs}

We use the name \textit{exceptional subgraph} for graphs listed in statements of the theorems, satisfying the following condition: if a graph $G$ of the considered class does not contain any of those subgraphs, then $G$ is colorable with desired number $k$ of colors. Of course, the clique $K_k$ is always an exceptional subgraph. In this section we will show that for $(claw, diamond,  N_{1,1,3})$-free class there exists only one non-trivial exceptional subgraph. 

\begin{theorem} \label{thm::N113}
	Let $G$ be a connected, $(claw, diamond)$-free graph. If $G$ is $N_{1,1,3}$-free, then 
	\begin{enumerate}
		\item $G$ contains $K_4$ or
		\item $G$ contains $B_{10}$ or
		\item $G$ is $3$-colorable. 
	\end{enumerate}
\end{theorem}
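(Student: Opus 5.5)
We work in the line-graph setting. Assume $G$ is $K_4$-free, since otherwise the first alternative holds. By Theorem \ref{linegraphs}, $G=L(H)$ for a connected, subcubic, triangle-free graph $H$. So it suffices to show that $H$ is $3$-edge-colorable unless $H$ contains $D_7$, the graph whose line graph is $B_{10}$ (that is, $K_{3,3}^*$ with a pendant structure as in Figure \ref{pic:thm::N222-pic1} for $i=1$).

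\textbf{Translating the forbidden subgraph.} An induced $N_{1,1,3}$ in $L(H)$ corresponds to the following configuration in $H$. Since $H$ is triangle-free, the triangle of the net comes from a vertex $v$ of degree $3$ with edges $va,vb,vc$. Two of its neighbours, say $a$ and $b$, each carry a further edge, $aa'$ and $bb'$. The third neighbour $c$ starts a path $c\,c_1c_2c_3$ of three further edges. Inducedness of the line graph requires that these edges pairwise share no endpoints except as prescribed. Hence $H$ has no vertex $v$ of degree $3$ with two non-leaf neighbours $a,b$ (with distinct outer neighbours) and a path of length $3$ leaving through $c$ that avoids $N[v]\cup\{a',b'\}$.

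\textbf{Reductions.} Apply Reductions 1--4. Each one preserves being $(claw,diamond,N_{1,1,3})$-free, because deleting vertices of $H$ gives an induced subgraph of $G$. Each also preserves $3$-edge-colorability in both directions. A minimal counterexample $H$ therefore has $\delta(H)\ge 2$, is bridgeless, has no two adjacent degree-$2$ vertices, and has no induced $C_4$ with two opposite degree-$2$ vertices. If $H$ is a cycle or bipartite, it is $3$-edge-colorable by K\H{o}nig's theorem. So $H$ contains an odd cycle of length at least $5$ and a degree-$3$ vertex.

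\textbf{Structural analysis.} Take a shortest odd cycle $C$, of length at least $5$.
\begin{itemize}
\item If $|C|\ge 7$: any degree-$3$ vertex on $C$, with its two cycle neighbours as $a,b$ and its off-cycle neighbour as $c$, forces $c$ to have no path of length $3$ avoiding the configuration. Symmetrically, using a cycle neighbour as $c$, the long path exists along $C$ itself. This is a contradiction unless all vertices of $C$ have degree $2$, which contradicts Reduction 2.
\item If $|C|=5$: the same argument shows that the degree-$3$ vertices of $C$ and their outer neighbours must be tightly packed. Bridgelessness and Reduction 3 then force $H$ to be a small graph, of order at most about $8$ to $10$.
\end{itemize}
This bounds the diameter of $H$ around $C$, so $V(H)$ lies within distance $2$ of $C$. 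The remaining work is a finite case analysis over how the few remaining vertices attach, using triangle-freeness and subcubicity. In each case we either write down an explicit $3$-edge-coloring or find $D_7$. The latter happens exactly when the structure closes up into $K_{3,3}$ plus a subdivided edge, detected via the overfull count of Observation \ref{easyfact}.

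\textbf{Main obstacle.} The hardest step is this last bounded case analysis around a $5$-cycle. We must argue carefully that every attachment either yields an induced $N_{1,1,3}$, is removable by one of the reductions, or yields $D_7$. I would organize it by the number of degree-$3$ vertices on $C$ (at most $5$, at least $3$ by Reduction 2) and by whether their outer neighbours coincide.
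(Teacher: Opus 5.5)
\textbf{Verdict: genuine gap.} Your translation to $H$ (no $S_{2,2,4}$ subgraph), the use of Reductions 1--4, and the split of a shortest odd cycle $C$ into $|C|\ge 7$ and $|C|=5$ all follow the paper.

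The $|C|\ge 7$ case works along your lines, but you skip one subcase. Suppose the off-cycle neighbour $w$ of $v_i$ has $v_{i+2}$ (or $v_{i-2}$) as its only other neighbour. Then the leg $v_iww'$ collides with the path along $C$. To fix this, apply Reduction 3 to the induced $4$-cycle $v_iv_{i+1}v_{i+2}w$ to get an off-cycle neighbour $x$ of $v_{i+1}$. Then the legs $v_iv_{i+1}x$, $v_iwv_{i+2}$ and $v_iv_{i-1}v_{i-2}v_{i-3}v_{i-4}$ form an $S_{2,2,4}$.

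The genuine gap is the case $|C|=5$, which you defer to ``a finite case analysis''. This is where all the content of the theorem lies: the shortest odd cycle of $D_7=K_{3,3}^*$ has length $5$, so $D_7$ can only arise here. Your suggested mechanism does not carry over. On a $5$-cycle, a path of length $3$ from $v_{i+1}$ along $C$ ends at $v_{i-1}$, which another leg already uses, so the long leg must leave the cycle. As a result, nothing in your proposal justifies that $V(H)$ lies within distance $2$ of $C$, or the claimed bound on $|V(H)|$. The missing steps are the following.
\begin{enumerate}
\item[(i)] Use an outward leg $v_i\,y\,u\,x\,x'$ with $y\in N_1(C)$ and $u\in N_2(C)$ to show that $N_3(C)=\emptyset$ and that $N_2(C)$ is independent.
\item[(ii)] If some $w\in N_2(C)$ exists, use Reduction 2 and further $S_{2,2,4}$'s to force $H=C\cup N[w]$, which has at most $9$ vertices, and color it explicitly.
\item[(iii)] If $N_2(C)=\emptyset$, show that $H[N(C)]$ is $2K_2$-free. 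Disjoint edges $xx'$ and $yy'$ with $xv_i, yv_{i+2}\in E(H)$ give the legs $v_ixx'$, $v_iv_{i-1}v_{i-2}$ and $v_iv_{i+1}v_{i+2}yy'$. With one more $S_{2,2,4}$, deduce that $|N(C)|\le 3$ and $H[N(C)]\subseteq P_3$.
\item[(iv)] On the resulting graphs with at most $8$ vertices, give a maximality argument showing that either $K_{3,3}^*\subseteq H$ or $H$ is a subgraph of one of two explicitly $3$-edge-colorable graphs.
\end{enumerate}
Observation \ref{easyfact} only certifies that $D_7$ is of class $2$. It neither locates $D_7$ nor excludes other obstructions, so it cannot replace step (iv).

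A minor point: $D_7$ is exactly $K_{3,3}^*$ and has no pendant structure. A pendant vertex would in any case be removed by Reduction 1.
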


\begin{proof}
	Let us assume that $G$ does not contain $K_4$. Then $G=L(H)$, where $H$ is some connected, subcubic, triangle-free graph, which does not contain a (not necessarily induced) $S_{2,2,4}$. Let us apply to $H$ all the Reductions 1-4. We will show that 
	\begin{itemize}
		\item $H = K_{3,3}^* = D_7$ or
		\item $H$ is $3$-edge-colorable. 
	\end{itemize}
	If $\Delta(H) \leq 2$ or $H$ is bipartite, then $\chi'(H) \leq 3$. Hence, we can assume without loss of generality that we can find  in $G$ an induced odd cycle $Q=v_1 \ldots v_p$, where $p \geq 5$. Let us assume that $p$ is minimal. We will distinguish two cases. 

    \vspace{0.5cm}
    \textbf{Case 1: $p\geq 7$}
    
    Note that if $N_2(Q) \neq \emptyset$ or if there is an edge non-adjacent to the cycle, then $H$ contains an (induced or not) $S_{2,2,4}$. Thus, we conclude that $Q$ is dominating and any vertex $v\in N(Q)$ has at least $2$ neighbors on $Q$. Now note that if the vertex $v$ has two neighbors $v_i, v_j \in Q$ such that $|i-j| \neq 2$, then we can find a shorter induced odd cycle, contradicting the minimality of $p$. Thus, for any $v \in V(G)\setminus Q$ we have $N_Q(v) = \{v_i, v_{i+2}\}$ for some $i$. But if so, then by Reduction 3 one of vertices $v, v_{i+1}$ must have an additional neighbor. From the previous steps we know that this neighbor does not belong to the cycle. So if $N(Q)$ is non-empty, we can find a star $S_{2,2,4}$ (see Figure \ref{pic:thm::s224_1}).

\begin{figure}[htb]
    \centering      
    \includegraphics[width=0.4\linewidth]{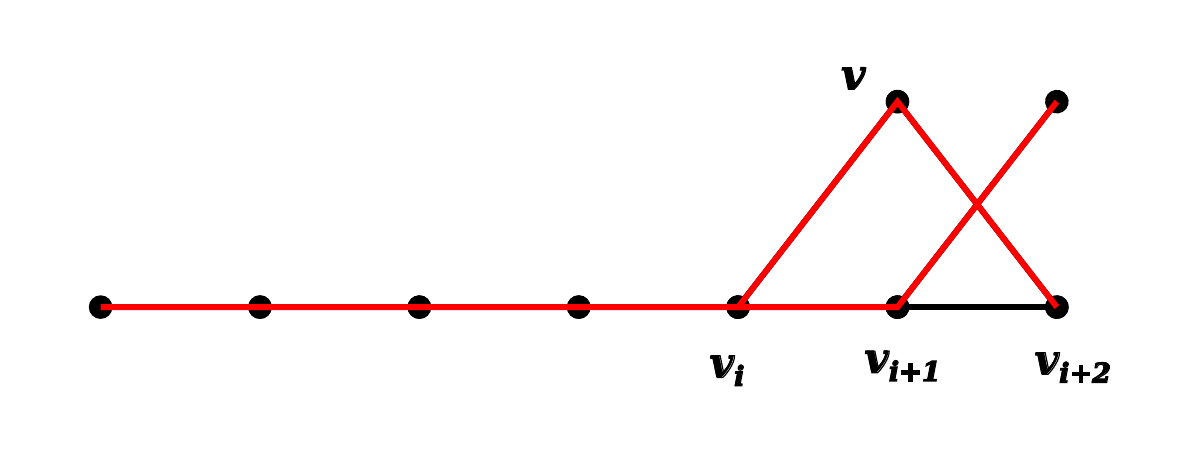} 
    \caption{\label{pic:thm::s224_1} Forbidden subgraph in Case 1 in the proof of Theorem \ref{thm::N113}.}
    
\end{figure}

Thus, in this case the graph $H$ is an odd cycle without chords, so it is $3$-edge-colorable. 

\vspace{0.5cm}
\textbf{Case 2: $p=5$}

Note that if there exists a vertex $v \in N_3(Q)$ adjacent to some $u \in N_2(Q)$, then by Reduction 1 the vertex $v$ must have an additional neighbor $u' \neq u$ and we can find a star $S_{2,2,4}$ (see Figure \ref{pic:thm::s224_2}). Thus, $N_3(Q)= \emptyset$. Similarly, if there exists an edge $uu'$, where $u, u' \in G[N_2]$, then from triangle-freeness of $H$ the vertices $u, u'$ must have two different neighbors $v, v' \in N_1(Q)$ and we can find $S_{2,2,4}$ (see Figure \ref{pic:thm::s224_3}). Thus, $N_2(Q)$ is a set of isolated vertices.  
\begin{figure}[htb]
    \centering    
    \begin{subfigure}{0.4\textwidth}
    \centering  
    \includegraphics[width=1.1\linewidth]{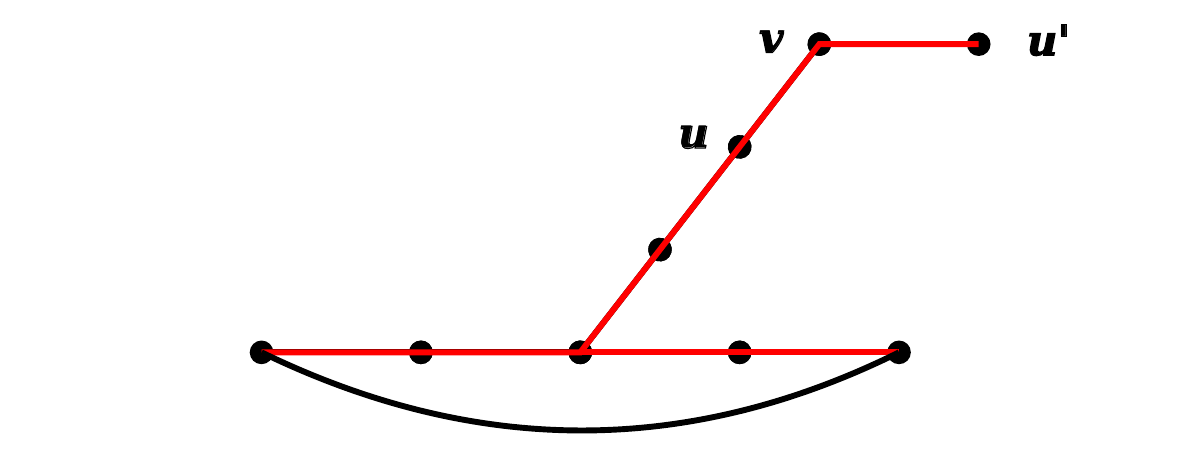} 
    \caption{\label{pic:thm::s224_2}}
    \end{subfigure}
    \begin{subfigure}{0.4\textwidth}
    \centering  
    \includegraphics[width=1.1\linewidth]{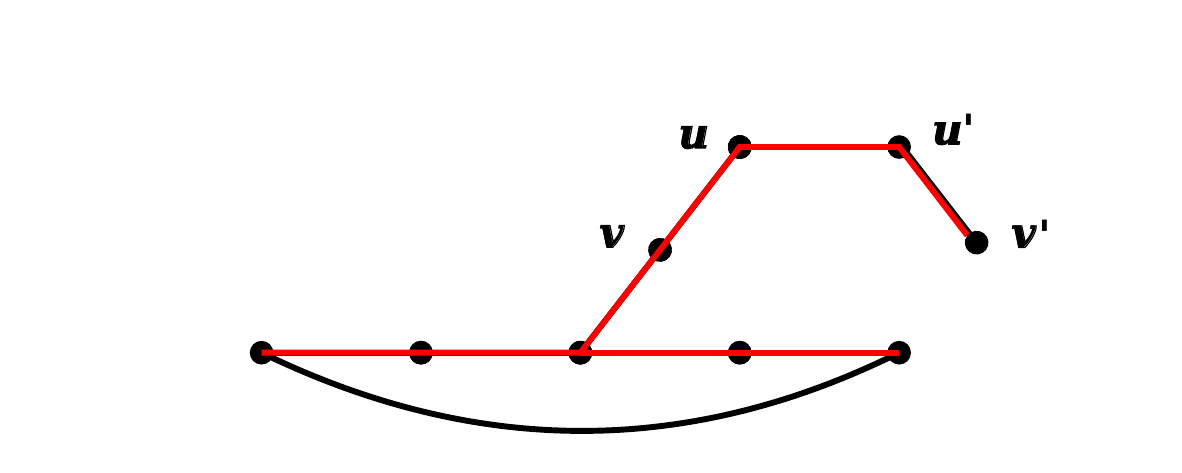}
    \caption{\label{pic:thm::s224_3}}
    \end{subfigure}
    \caption{Forbidden subgraphs in Case 2 of Theorem \ref{thm::N113}.}
\end{figure}

Let us consider a vertex $w \in N_2(Q)$. By Reduction 1 the vertex $w$ has at least two neighbors $v, v'$ (where $v, v' \in N_1(Q)$). By Reduction 2 either $d(w)=3$ or $d(v)=d(v')=3$. 

If $d(w)=3$, then $N(w) = \{u, u', u''\} \subset N_1(Q)$ and any vertex of $N(w)$ has its private neighbor on $Q$ (since $\Delta(G) <4$). If the three private neighbors are consecutive, we can easily find $S_{2, 2, 4}$ (see Figure \ref{pic:thm::s224_4}), so suppose otherwise. If $G = Q \cup \{w, u, u', u''\}$, then $H$ is easily $3$-edge-colorable (see Figure \ref{pic:thm::s224_5}). But if there is more vertices, then there must be a vertex $z$ adjacent either to some vertex $v_i$ of $Q$ (where $v_i$ is not a neighbor of $u, u', u''$) or to some vertex of $\{u, u', u''\}$. In both cases we can find $S_{2, 2, 4}$ (see Figure \ref{pic:thm::s224_6}).

\begin{figure}[htb]
    \centering    
    \begin{subfigure}{0.3\textwidth}
    \centering  
    \includegraphics[width=0.8\linewidth]{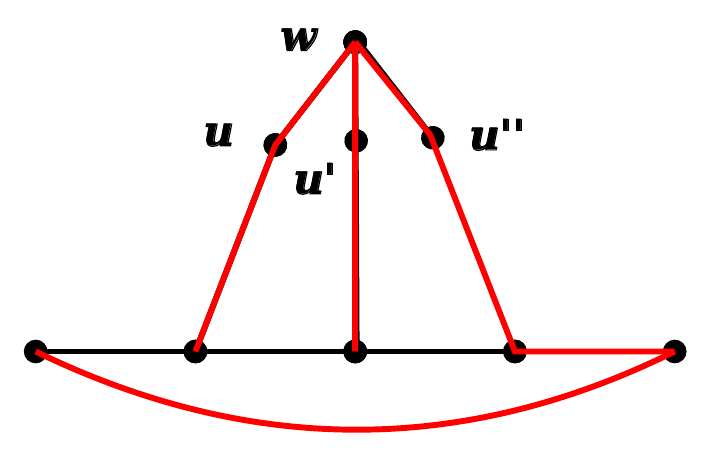} 
    \caption{\label{pic:thm::s224_4}}
    \end{subfigure}
    \begin{subfigure}{0.3\textwidth}
    \centering  
    \includegraphics[width=0.8\linewidth]{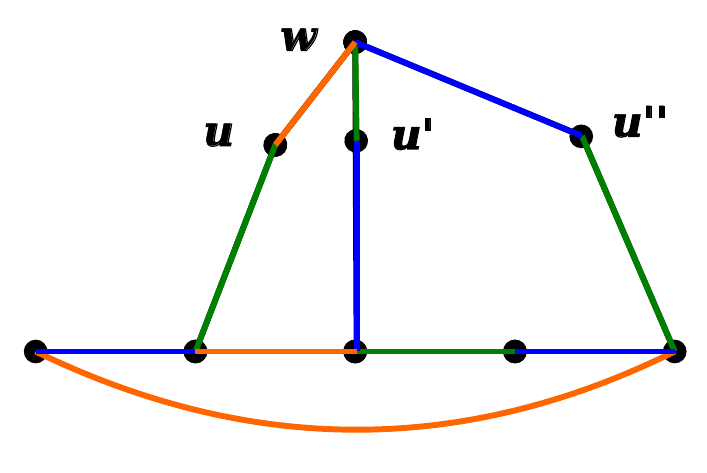}
    \caption{\label{pic:thm::s224_5}}
    \end{subfigure}
    \begin{subfigure}{0.3\textwidth}
    \centering  
    \includegraphics[width=0.8\linewidth]{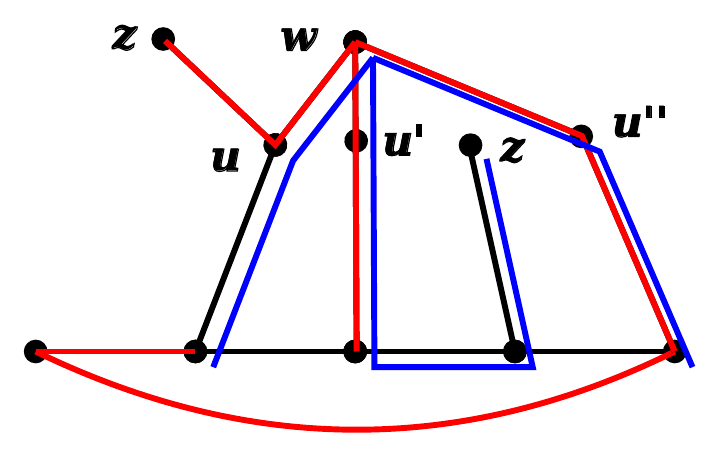}
    \caption{\label{pic:thm::s224_6}}
    \end{subfigure}
    \caption{Forbidden subgraphs and coloring of neighborhood of $C_5$ in Case 2 of Theorem~\ref{thm::N113}.}
\end{figure}    

Hence, we may assume $d(w) = 2$ and $d(u)=d(u')=3$, that is, $u, u'$ have additional neighbors $v$ and $v'$ respectively. If $v \notin Q$ or $v'\notin Q$, we can find $S_{2, 2, 4}$ (see Figure \ref{pic:thm::s224_7}). Otherwise we have $G= Q \cup \{w, u', u''\}$ and $G$ is $3$-edge-colorable (see Figure \ref{pic:thm::s224_8}). 

\begin{figure}[htb]
    \centering    
    \begin{subfigure}{0.3\textwidth} 
    \centering  
    \includegraphics[width=0.8\linewidth]{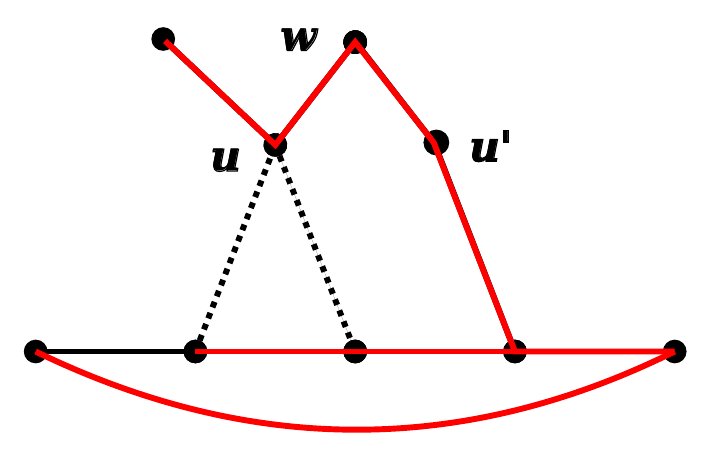} 
    \caption{\label{pic:thm::s224_7}}
    \end{subfigure}
    \begin{subfigure}{0.3\textwidth}
    \centering  
    \includegraphics[width=0.8\linewidth]{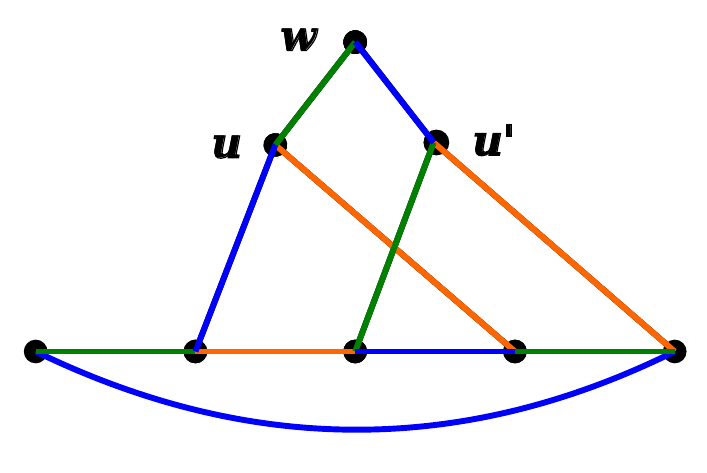}
    \caption{\label{pic:thm::s224_8}}
    \end{subfigure}
    \centering    
    \begin{subfigure}{0.3\textwidth}
    \centering  
    \includegraphics[width=0.8\linewidth]{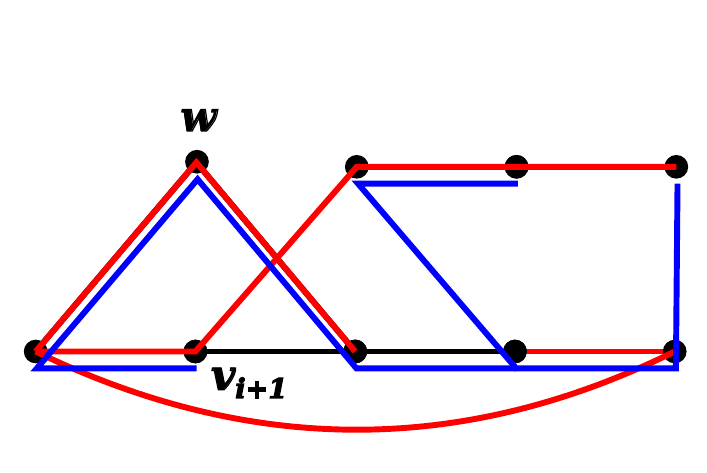} 
    \caption{}\label{pic:thm::s224_10}
    \end{subfigure}
    \caption{Forbidden subgraphs and coloring of neighborhood of $C_5$ in Case 2 of Theorem~\ref{thm::N113}.}
\end{figure} 

Thus, we can assume $N_2(Q)= \emptyset$. Let us consider $N(Q)$.

If $N(Q)$ contains two disjoint edges $xx'$ and $yy'$, then we can always find two vertices $v_i, v_{i+2} \in Q$ adjacent to $xx', yy'$ respectively and we have $S_{2,2,4}$. So $N(Q)$ is $2K_2$-free.  

Suppose $|N(Q)| \geq 4$. Then at least three vertices have only one neighbor on $Q$, so there are at least two edges in $N(Q)$ (since $\delta(G) > 1$). So, since $N(Q)$ is $2K_2$-free, it does contain $P_3$ and at least one isolated vertex $w$. Note that without loss of generality there exists $i$ such that $w$ is adjacent to $v_i, v_{i+2} \in Q$. Whether some end-vertex of $P_3$ is adjacent to $v_{i+1}$ or not, we can always find $S_{2,2,4}$ (see Figure \ref{pic:thm::s224_10}). 

Thus, $|N(Q)| \leq 3$ and $N(Q)$ is isomorphic to some subgraph of $P_3$.

Note that if $H$ does not contain $K^*_{3, 3}$ (see Figure \ref{pic:thm::s224_11}), then it is a subgraph of one of $3$-edge-colorable graphs depicted in Figures \ref{pic:thm::s224_12} and \ref{pic:thm::s224_13}. It is trivial if $|H| \leq 7$, so suppose $|H|=8$ and $H$ is maximal in the sense that adding another vertex or another edge would destroy one of properties of $H$ (triangle-freeness, subcubicidity, minimality with respect to Reductions). If $N(Q)$ contains an isolated vertex, then (since $K^*_{3, 3} \nleq H$) we can always add in $N(Q)$ an edge between the isolated vertex and one of the remaining ones, which contradicts the maximality of $H$. Thus, $H[N(Q)]$ is isomorphic to $P_3$. If both end-vertices of the path $P_3$ are of degree $1$ on the cycle, then there are no more than $3$ edges between $Q$ and $N(Q)$ and the graph is not maximal. Thus, one endvertex is adjacent to two vertices of $Q$, say $v_1$ and $v_3$. If the second endvertex is adjacent to $v_2$, $H'$ is isomorphic to the $3$-edge-colorable graph depicted in Figure \ref{pic:thm::s224_12}. Otherwise, from the triangle-freeness of $H'$, the second endvertex has degree $1$ on the cycle, and from the maximality of $H'$, the middle vertex is adjacent to $v_2$. Thus, $H'$ is isomorphic to the graph depicted in Figure \ref{pic:thm::s224_13}.

\begin{figure}[htb]
    \begin{subfigure}{0.3\textwidth}
    \centering  
    \includegraphics[width=0.8\linewidth]{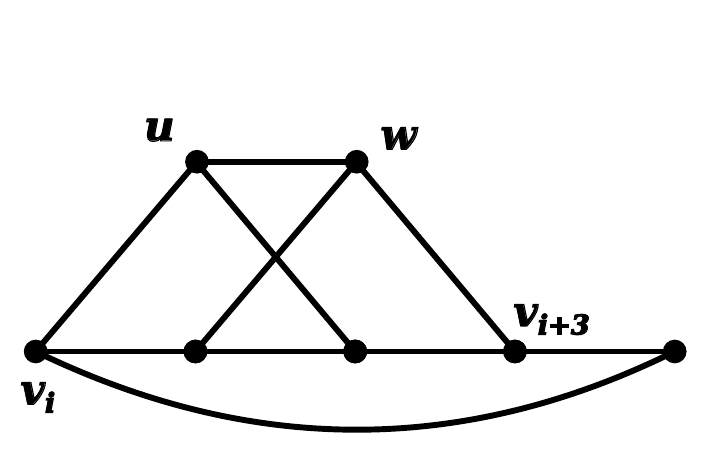}
    \caption{}\label{pic:thm::s224_11}
    \end{subfigure}
    \begin{subfigure}{0.3\textwidth}
    \centering  
    \includegraphics[width=0.8\linewidth]{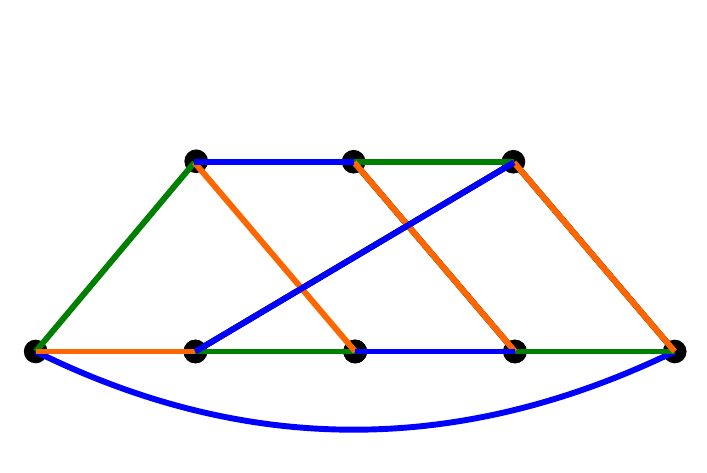}
    \caption{}\label{pic:thm::s224_12}
    \end{subfigure}
    \begin{subfigure}{0.3\textwidth}
    \centering  
    \includegraphics[width=0.8\linewidth]{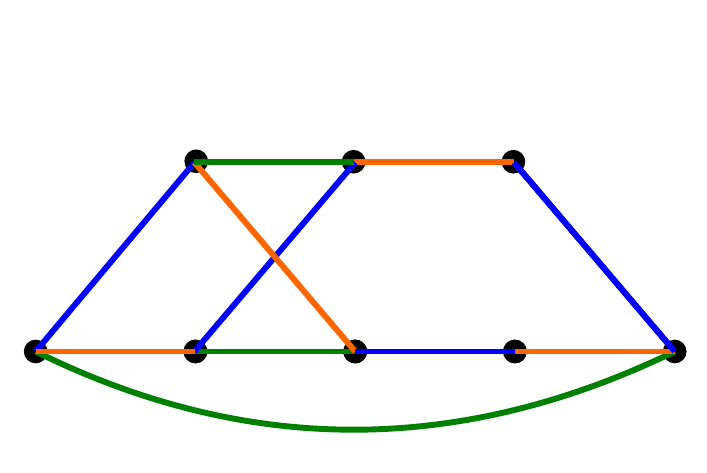}
    \caption{}\label{pic:thm::s224_13}
    \end{subfigure}
    \caption{Possible neighborhood of $C_5$ and its coloring in Case 2 of Theorem \ref{thm::N113}.}
\end{figure} 

\end{proof}

\subsection{The (claw, diamond, N$_{2,2,2}$)-free graphs without short cycles}

In this section we will prove that for the class of $(claw, diamond, N_{2,2,2}, C_5)$-free graphs we have only two families of exceptional graphs. It is the first part of analysis of the $(claw, diamond, N_{2,2,2})$-free class. In the next subsection, a similar theorem will be stated for  $(claw, diamond, N_{2,2,2})$-free graphs containing $C_5$.  
\begin{theorem}\label{thm::N222}
    Let $G$ be a connected, $(claw, diamond)$-free graph. If $G$ is also $(N_{2, 2, 2}, C_5)$-free, then
    \begin{enumerate}
        \item $G$ contains $K_4$ or
        \item $G$ contains $B_{9i +1}$ for some $i \geq 1$ or
        \item $G$ contains $B_{9i +7}$ for some $i \geq 1$ or
        \item $G$ is $3$-colorable. 
    \end{enumerate}
\end{theorem}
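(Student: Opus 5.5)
Following the proof of Theorem~\ref{thm::N113}, I assume $G$ is $K_4$-free, so by Theorem~\ref{linegraphs} we have $G=L(H)$ with $H$ connected, subcubic and triangle-free. An induced $N_{2,2,2}$ in $L(H)$ corresponds to a (not necessarily induced) spider $S_{3,3,3}$ in $H$, i.e.\ a vertex of degree~$3$ with three pairwise disjoint paths of length $3$ leaving it. An induced $C_5$ in $G$ corresponds to a $5$-cycle in $H$. Hence $H$ is $C_5$-free (and triangle-free) and contains no $S_{3,3,3}$. I apply Reductions~1--4, so $\delta(H)\ge 2$, no two vertices of degree $2$ are adjacent, there is no induced $4$-cycle with two opposite vertices of degree~$2$, and $H$ is bridgeless. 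The goal is to show that $H$ is $3$-edge-colorable or contains some $D_{6i+1}$ or $D_{6i+5}$. These reductions preserve the exceptional subgraphs in $G$, so the conclusion transfers back to the original graph.

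If $H$ is bipartite or $\Delta(H)\le 2$, we are done by K\"onig's theorem. Otherwise I take a shortest odd cycle $Q=v_1\dots v_p$ in $H$; it has $p\ge 7$ and is induced.

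\textbf{Long cycles.} For $p\ge 9$ I expect any vertex outside $Q$ to produce an $S_{3,3,3}$: a vertex of $N(Q)$ attached at $v_i$ gives the branch center $v_i$ with two long arcs of $Q$ as legs. The third leg needs length $3$ outside $Q$, and bridgelessness together with Reductions~1--2 should supply it. As in Case~1 of Theorem~\ref{thm::N113}, minimality of $p$ forces $N_Q(v)=\{v_i,v_{i+2}\}$ for any vertex with two neighbours on $Q$, and Reduction~3 then yields a further neighbour. So $H=Q$ is an odd cycle, which is $3$-edge-colorable. The case $p=7$ needs a finite local analysis of $N(Q)$ and $N_2(Q)$, similar to Case~2 of Theorem~\ref{thm::N113}.

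\textbf{Main case.} Since odd cycles are scarce, the real structure should come from long chains of bipartite blocks. I would decompose $H$ along $2$-edge-cuts. The absence of $S_{3,3,3}$ restricts how far any degree-$3$ vertex can see in three disjoint directions, so the $3$-edge-connected pieces must be small, subcubic, triangle- and $C_5$-free graphs. The aim is to show each piece is essentially $K_{3,3}$ minus an edge, or the cube-like pieces used in Figures~\ref{pic:thm::N222-pic1} and \ref{pic:thm::N222-pic2}, linked cyclically by single edges or a subdivided edge. Along such a chain, each $K_{3,3}-e$ forces equal colors on its two outgoing edges, as in $K_{3,3}^*$. Therefore the chain is $3$-edge-colorable unless the parity or closing pattern is exactly that of $D_{6i+1}$ or $D_{6i+5}$. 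Whenever the pattern deviates, for instance an extra degree-$2$ vertex placed differently, or a different gadget, I would exhibit an explicit $3$-edge-coloring.

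The main obstacle is this global step: proving that $S_{3,3,3}$-freeness together with the reductions really forces the chain-of-$K_{3,3}$ structure, without an explosion of cases. I would first attack it by fixing a shortest odd cycle of length $7$. I would show it must traverse gadgets in a specific way and then grow the structure outward gadget by gadget, using $S_{3,3,3}$-freeness at each branch vertex to forbid any attachment other than the next gadget in the chain.
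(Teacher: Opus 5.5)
There is a genuine gap, and it already appears in your \textbf{Long cycles} step. The claim that $p\ge 9$ forces $H=Q$ is false. Take $D_{13}$: two copies of $K_{3,3}-e$ closed into a ring by one edge and one subdivided edge. It has the following properties.
\begin{itemize}
\item It is subcubic, triangle-free and bridgeless, and has a single vertex of degree $2$, so it is reduced.
\item It has no $5$-cycle and contains no $S_{3,3,3}$.
\item Its shortest odd cycle has length $9$.
\item It has $19$ edges and matching number $6$, so it is overfull and not $3$-edge-colorable.
\end{itemize}
More generally, $D_{6i+1}$ and $D_{6i+5}$ have shortest odd cycles of lengths $4i+1$ and $4i+3$.

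The ``third leg of length $3$ outside $Q$'' is exactly what fails. In these graphs a vertex $w\in N(Q)$ with $N_Q(w)=\{v_i,v_{i+2}\}$ has only one further neighbour $w'$, and the cycle neighbours of $w'$ again lie in $\{v_{i-1},\dots,v_{i+3}\}$. So every path leaving $Q$ re-enters the window already used by the two arc legs. Bridgelessness and Reductions~1--2 are perfectly compatible with such local attachments.

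Your framework also contradicts the statement itself. If $p\ge 9$ gave $H=Q$ and $p=7$ gave a bounded local picture, there would be only finitely many exceptions. The theorem, however, lists infinite families.

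Because of this, your \textbf{Main case} is aimed at the wrong place (a $7$-cycle), and, as you say yourself, its key step is unproved; the decomposition along $2$-edge-cuts is not needed. The missing idea is an argument that works uniformly for every $p\ge 7$:
\begin{itemize}
\item With centre $v_i$ and the two arcs of $Q$ as legs, $S_{3,3,3}$-freeness immediately gives $N_2(Q)=\emptyset$ and shows that $N(Q)$ is $P_3$-free.
\item Minimality of $Q$ then gives $N_Q(w)=\{v_i,v_{i+2}\}$ whenever $d_Q(w)=2$, and confines every edge $ww'$ of $N(Q)$ to four consecutive vertices of $Q$.
\end{itemize}
Hence $H$ is the odd cycle $Q$ carrying a cyclic sequence of small local gadgets, each containing at least two vertices of $N(Q)$ by Reductions~2 and~3. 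Each gadget does one of three things: it forces equal colors on its two end-edges on $Q$ (the $K_{3,3}-e$ gadget), forces different colors, or admits both. A parity count around the odd cycle then shows that $H$ fails to be $3$-edge-colorable only for $D_{6i+1}$ and $D_{6i+5}$. This is how the paper proves Theorem~\ref{thm::N222lin}, from which the statement follows.
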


In fact, Theorem \ref{thm::N222} will follow as a corollary from the more specific theorem below.

\begin{theorem}\label{thm::N222lin}
    Let $H$ be a connected, subcubic triangle-free graph, minimal with respect to the Reductions 1-4. If $H$ is also $( S_{3,3,3}, C_5)$-free, then
     \begin{enumerate}
        \item $H$ is isomorphic to $D_{6i +1}$ for some $i \geq 1$ or
        \item $H$ is isomorphic to $D_{6i +5}$ for some $i \geq 1$ or
        \item $H$ is $3$-edge-colorable. 
    \end{enumerate}
\end{theorem}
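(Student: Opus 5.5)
We follow the scheme of the proof of Theorem \ref{thm::N113}. First we translate the hypotheses. $G=L(H)$ being $N_{2,2,2}$-free means that $H$ contains no (not necessarily induced) $S_{3,3,3}$. The reason is that a triangle in a line graph of a triangle-free graph comes from a star $K_{1,3}$, and three pendant paths of length $2$ in $G$ lift to three paths of length $2$ beyond the leaves. Similarly, $G$ being $C_5$-free means that $H$ has no $C_5$ as a subgraph. If $H$ is bipartite or $\Delta(H)\le 2$, then $\chi'(H)\le 3$ by K\"onig's theorem. So we assume $H$ contains a shortest odd cycle $Q=v_1\ldots v_p$. Since $H$ is triangle-free and $C_5$-free, $p\ge 7$. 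As in Case 1 of Theorem \ref{thm::N113}, minimality of $p$ forces every vertex outside $Q$ to have at most two neighbours on $Q$, and two such neighbours must be of the form $v_i,v_{i+2}$.

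Next we bound the length of $Q$ and the way $H$ hangs off it, using the absence of $S_{3,3,3}$. Suppose $p\ge 9$. Then any vertex $x\notin Q$ adjacent to $v_i$ whose path leaves $Q$ for two more steps, together with the two arcs of $Q$ from $v_i$, gives an $S_{3,3,3}$. Hence only very short excursions are possible. Combined with Reductions 1--4 (no pendant vertices, no adjacent degree-$2$ vertices, no reducible $4$-cycles, no cut-edges), this should force $H=Q$, which is $3$-edge-colorable. So the essential case is $p=7$. There we analyse the layers $N_1(Q),N_2(Q),\dots$. A path of length $3$ leaving $v_i$, together with the two arcs of length $3$ of $Q$ from $v_i$, forms an $S_{3,3,3}$. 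Hence $N_3(Q)=\emptyset$, and every edge with an end in $N_2(Q)$ is tightly constrained. Because $H$ has no short odd cycles and no $C_5$, the attachments $v_i,v_{i+2}$ together with degree-$2$ vertices of $Q$ produce a structure of $4$-cycles. Reduction 3 and the degree constraints then force these to close up into copies of $K_{3,3}$ minus an edge, glued along the cycle.

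The main step, and the expected obstacle, is to show that this rigid local structure propagates globally. The claim is that $H$ is either bipartite apart from a bounded part that can be coloured by hand, or is exactly a chain of $i$ copies of $K_{3,3}$ linked as in $D_{6i+1}$ or $D_{6i+5}$. For this we would take a maximal ``$K_{3,3}$-block'' $B$, meaning a copy of $K_{3,3}$ minus an edge, whose two deficient vertices connect to the rest of $H$. We then argue by induction on the number of blocks. Replacing a block $B$ by a single edge between its two outside neighbours preserves $3$-edge-colorability in both directions, since the two ends of $K_{3,3}^*$ are forced to the same colour, as observed after Observation \ref{easyfact}. It also preserves subcubicity and the forbidden structures, except possibly for creating a triangle or a short odd cycle; those cases are handled directly and yield the terminal configurations $D_7$ and $D_{11}$. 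Iterating, $H$ reduces either to a $3$-edge-colorable graph, in which case $H$ itself is $3$-edge-colorable, or to the base of one of the two families. In the latter case $H\cong D_{6i+1}$ or $H\cong D_{6i+5}$, which are non-colorable by the overfull argument of Observation \ref{easyfact}.

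The delicate part is verifying that no $S_{3,3,3}$-free graph mixes blocks with other odd structure. We expect to handle this by exhibiting an explicit $S_{3,3,3}$ whenever a vertex outside the chain attaches to it.
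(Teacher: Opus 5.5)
\textbf{There is a genuine gap, at your reduction to $p=7$.} You claim that for $p\ge 9$, $S_{3,3,3}$-freeness together with Reductions 1--4 forces $H=Q$. This is false, and the graphs in the conclusion are themselves counterexamples. In $D_{6i+1}$ and $D_{6i+5}$ every odd cycle runs around the whole necklace, so the shortest odd cycle has length $4i+1$, resp.\ $4i+3$, which is unbounded. For instance, $D_{13}$ is a $9$-cycle carrying two $K_{3,3}-e$ gadgets and one free degree-$2$ vertex. It is reduced, triangle-free, $C_5$-free and $S_{3,3,3}$-free, has $p=9$, and is not $3$-edge-colorable.

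The proposal is also internally inconsistent here. If only $p=7$ mattered, then $N_3(Q)=\emptyset$ and subcubicity give $|V(H)|\le 7+7+14$, so the infinite families you later build by block induction could never arise.

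What your ``excursion'' observation actually yields holds for every $p\ge 7$:
\begin{itemize}
\item $N_2(Q)=\emptyset$. A vertex $y\in N_2(Q)$ has a second neighbour off $Q$ by Reduction 1, giving a path $v_ixyz$ that leaves $Q$ and hence an $S_{3,3,3}$.
\item $H[N(Q)]$ is $P_3$-free, by the same argument.
\end{itemize}
Hence $V(H)=V(Q)\cup N(Q)$. Each vertex of $N(Q)$ sees one cycle vertex or a pair $v_i,v_{i+2}$, and each edge $ww'$ inside $N(Q)$ sees $Q$ only within a window $v_i,\dots,v_{i+3}$. This is the structure the proof needs, and it does not collapse to $H=Q$.

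\textbf{The second missing idea is how to decide colorability once this structure is known.} The paper reads $H$ as a necklace of small gadgets along $Q$ and classifies each gadget by its two end-edges on $Q$:
\begin{itemize}
\item the end-edges must get equal colors (the $K_{3,3}-e$ gadget);
\item they must get different colors (the $K_{2,3}$-type gadget occurring in $D_{6i+5}$);
\item either is possible (a flexible gadget).
\end{itemize}
In addition, a free degree-$2$ vertex of $Q$ forces a color change. Going around the odd cycle, a $3$-edge-coloring fails only when there is no flexible gadget and exactly one forced change. This yields precisely $D_{6i+1}$ and $D_{6i+5}$.

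Your block-contraction induction does not replace this argument. It handles only $K_{3,3}$-blocks, and it leaves the class because contraction creates $C_5$'s and triangles. Your plan to rule out every other attachment by exhibiting an $S_{3,3,3}$ also cannot work. Take a $9$-cycle with a $K_{3,3}-e$ gadget on $v_1,\dots,v_4$, nonadjacent degree-$2$ vertices $w\sim v_5,v_7$ and $w'\sim v_6,v_8$, and $v_9$ free. This graph is reduced, $C_5$-free and $S_{3,3,3}$-free, yet has a non-block attachment. It is $3$-edge-colorable exactly because that gadget is flexible, so such configurations must be handled by the color-transfer argument, not excluded.
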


\begin{proof}

    If $H$ is bipartite, then $\chi'(H)=\Delta(H)$, so let us assume otherwise and consider the shortest induced odd cycle $Q=v_1 \ldots v_p$. Since $H$ is $C_5$-free, we have $p \geq 7$. Of course, $N_3(Q) =\emptyset$, since otherwise we can easily find $S_{3,3,3}$. Suppose now $u\in N_2(Q)$ and $v \in N_1(Q)$ is a neighbor of $u$. But $u$ must have another neighbor $v' \neq v$ (by Reduction 1 and triangle-freeness of $H$) and in the graph $Q \cup {v, u, v'}$ we can easily find a star $S_{3,3,3}$. Thus, $N_2(Q) = \emptyset$. By the same argument we have that $N(Q)$ is $P_3$-free.

    Since $Q$ is a shortest cycle in $H$, then for every $w \in N(Q)$ we have $d_Q(w) \leq 2$. Moreover, if $d_Q(w)=2$, then $N_Q(w) = \{v_i, v_{i+2}\}$ for some $i$, and if $ww' \in E(H)$ for some $w'\in N(Q)$, then $N_Q(\{w, w'\}) \subset \{v_i, \ldots, v_{i+3}\}$ for some $i$. 

    Thus, the graph $H$ is a necklace of following gadgets (note that by Reductions 2 and 3 every gadget must contain at least $2$ vertices from $N(Q)$ and one of those vertices must have two neighbors on the cycle):

\begin{figure}[htb]
    \centering    
    \begin{subfigure}{0.22\textwidth}
    \centering  
    \includegraphics[width=1\linewidth]{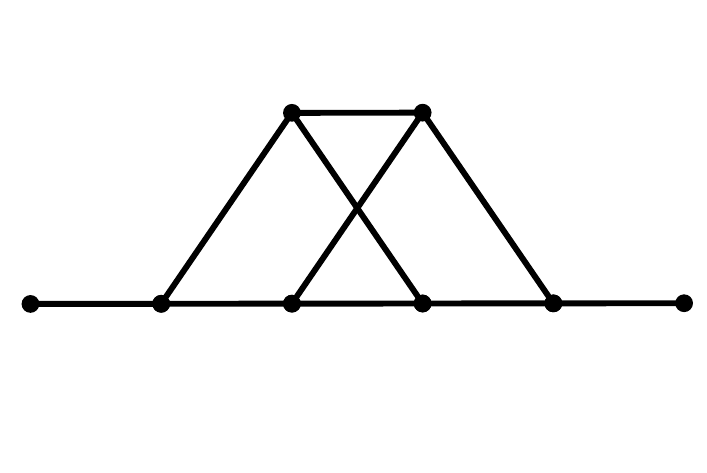} 
    \caption{\label{pic:claim_a}}
    \end{subfigure}
    \begin{subfigure}{0.22\textwidth}
    \centering  
    \includegraphics[width=1\linewidth]{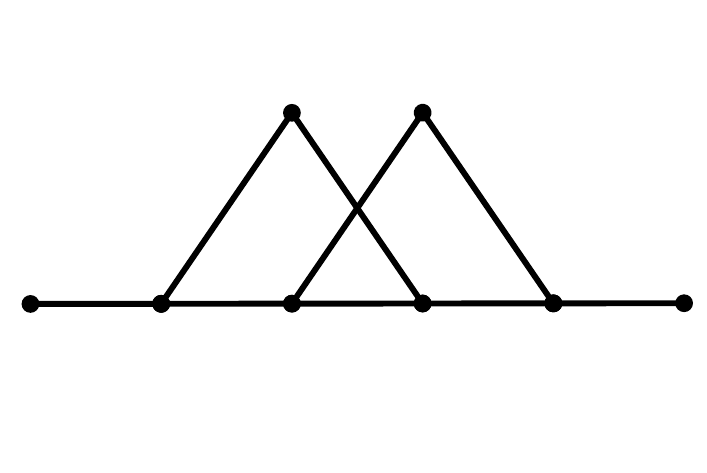}
    \caption{\label{pic:claim_b}}
    \end{subfigure}
    \begin{subfigure}{0.22\textwidth}
    \centering  
    \includegraphics[width=1\linewidth]{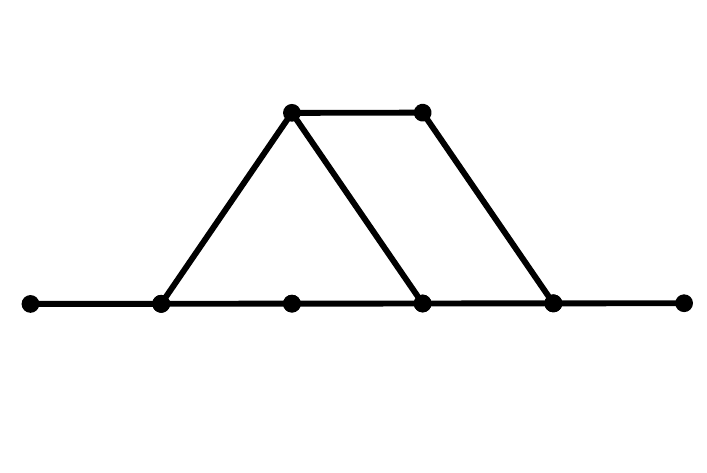}
    \caption{\label{pic:claim_c}}    
    \end{subfigure}
    \begin{subfigure}{0.22\textwidth}
    \centering  
    \includegraphics[width=1\linewidth]{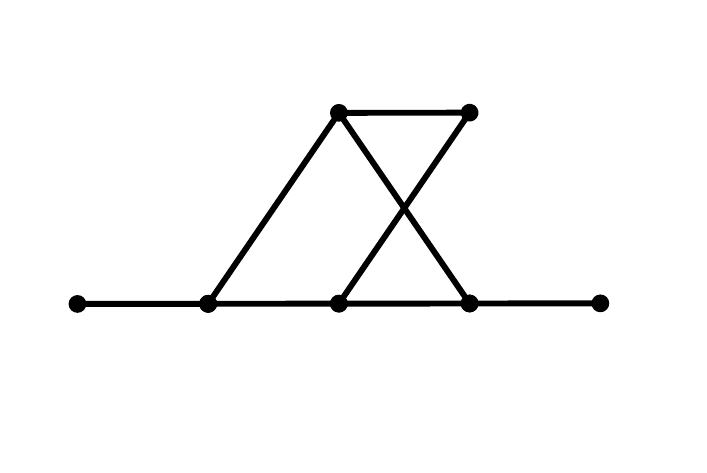}
    \caption{\label{pic:claim_d}}    
    \end{subfigure}
    \caption{Gadgets of graph $H$ in the proof of Theorem \ref{thm::N113}.}\label{pic:claim}
\end{figure}
    
    Note that gadgets $(b)$ and $(c)$ admit coloring with the same colors on the end-edges and different as well. Gadget $(a)$ admits only colorings with the same colors on the end-edges and $(d)$ - only with different colors. Thus, if $H$ is non-$3$-edge-colourable, it can contain only $(a)$ gadgets and at most one $(d)$ gadget. Moreover, the cycle can contain no more than $1$ vertex not belonging to any gadget, and if it contains such a vertex, then it does not contain a gadget $(d)$. This leaves us with two possibilities only - a necklace of gadgets $(a)$ with one edge subdivided (family $D_{6i+1}$) and a necklace of gadgets $(a)$ and one gadget $(d)$ (family $D_{6i+5}$). 

    This finishes the proof of Theorem \ref{thm::N222lin}.
    
\end{proof}

\subsection{The (claw, diamond, N$_{2,2,2}$)-free graphs containing $C_5$}

 In the main proof of this subsection we will be using several technical edge-coloring lemmas. 
 
 The first lemma provides an algorithm coloring edges of a Hamiltonian graphs satisfying at least one of additional conditions.  
\begin{Lemma}\label{algorithm}
    Let $H$ be a connected, Hamiltonian, subcubic graph and let  $v_1 \ldots v_{2p+1}$ be the Hamiltonian cycle. If 
    \begin{enumerate}
        \item there is an edge $v_iv_j \in E(G)$  such that $|i-j|>1 $ and $d(v_{i-1})=d(v_{j-1})=2$ or
        \item there is an edge $v_iv_j \in E(G)$ such that $|i-j|>1 $ and $d(v_{i+1})=d(v_{j+1})=2$ or
        \item there is $i \in [2p+1]$ such that $d(v_{i-1})=d(v_{i+1})=2$, 
    \end{enumerate}
    then $\chi'(G)=3$.
\end{Lemma}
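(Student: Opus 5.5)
The plan is to build an explicit proper $3$-edge-colouring. The colour classes will be $1,2$ on the Hamiltonian cycle $C=v_1\ldots v_{2p+1}$ and colour $3$ on the chords, with one local repair that absorbs the odd parity of $C$. Since $H$ is subcubic, every vertex meets exactly two edges of $C$ and at most one chord, so the chords form a matching $M$. If every vertex incident to $M$ sees colours $1$ and $2$ on its two cycle edges, then giving all of $M$ colour $3$ is proper. The only obstruction is that $C$ is odd, so $1,2$ cannot simply alternate around $C$. Each hypothesis is used to place the parity defect at vertices of degree $2$, which carry no chord.

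\textbf{Conditions 1 and 2.} I treat condition 1; condition 2 is the mirror image, obtained by reversing the orientation of $C$. Let $v_iv_j$ be the chord with $d(v_{i-1})=d(v_{j-1})=2$. Colour $v_{i-1}v_i$ and $v_{j-1}v_j$ with $3$, colour $v_iv_{i+1}$ and $v_jv_{j+1}$ with $2$, and give the chord $v_iv_j$ colour $1$. The rest of $C$ splits into two arcs, from $v_{i+1}$ to $v_{j-1}$ and from $v_{j+1}$ to $v_{i-1}$. On each arc I continue alternating $1,2,1,\ldots$ starting after the edge coloured $2$. Each arc ends at a vertex of degree $2$ whose other edge has colour $3$, so the parity at the end of the arc is irrelevant. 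Every other chord receives colour $3$. Its endpoints are neither $v_i,v_j$ (their chord is $v_iv_j$) nor $v_{i-1},v_{j-1}$ (these have degree $2$), so they see only colours $1,2$ on $C$. I still need to check the degenerate configurations where the two arcs are very short, e.g.\ $j=i+2$. The hypothesis $|i-j|>1$ is what guarantees that the four special cycle edges are distinct and correctly placed.

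\textbf{Condition 3.} If $d(v_i)=2$ as well, then $v_{i-1}v_i$ has both ends of degree $2$. In that case I colour it $3$, alternate $1,2$ along the remaining even path of $C$, and colour all chords $3$. Otherwise $v_i$ has a chord $v_iv_j$. I then form $H'=H-v_i+v_{i-1}v_{i+1}$. Its Hamiltonian cycle has even length $2p$, so $H'$ is $3$-edge-coloured by alternating $1,2$ on the cycle and putting $3$ on the chords. In $H'$, colour $3$ is missing at $v_j$ as well as at $v_{i-1}$ and $v_{i+1}$. Say $v_{i-1}v_{i+1}$ has colour $a$ and let $b$ be the other of $1,2$. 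Reinserting $v_i$, I colour $v_{i-1}v_i$ with $a$ and $v_iv_{i+1}$ with $3$; this is proper because $v_{i-1}$ and $v_{i+1}$ have degree $2$. The chord $v_iv_j$ must then avoid $a$ and $3$, so it wants colour $b$, but $v_j$ sees $b$ on $C$.

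I will repair this with a Kempe $(b,3)$-chain starting at $v_j$ along its $b$-coloured cycle edge, and swap colours $b$ and $3$ on that chain. This frees $b$ at $v_j$ without disturbing $v_i$, provided the chain does not terminate at $v_i$ or $v_{i+1}$. The main obstacle is exactly this step: controlling where the $(b,3)$-chain ends. If it would end at $v_i$ or $v_{i+1}$, I will try the symmetric choice with the roles of $v_{i-1}$ and $v_{i+1}$ (hence of $a$ and $b$) exchanged. I expect that a parity count of the vertices missing colour $3$ (only $v_j$, $v_{i-1}$ and $v_{i+1}$) shows that at least one of the two choices yields a chain avoiding the bad endpoint.
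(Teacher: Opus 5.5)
\textbf{Conditions 1 and 2 are fine.} Your colouring is the one the paper reaches after its single red/blue switch. Because you colour the two arcs independently, and each ends at a degree-$2$ vertex whose other edge has colour $3$, you do not need the paper's normalisation $j=i+2n$. The degenerate arcs you worry about are harmless: a one-vertex arc is $v_{i-1}$ or $v_{j-1}$, which has degree $2$ and sees colours $2$ and $3$. The subcase $d(v_i)=2$ of Condition 3 is also correct.

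\textbf{The gap is in the remaining case of Condition 3.} This is exactly the step you leave as an expectation, and the justification you sketch would not work.

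\begin{itemize}
\item It is false that only $v_j,v_{i-1},v_{i+1}$ miss colour $3$ in $H'$. Every chordless vertex of $H'$ does, so the $(b,3)$-subgraph has many path components.
\item Every such path joins two chordless vertices and begins and ends with a $b$-edge. So a parity count cannot tell a good chain end from a bad one.
\item Switching sides at $v_i$ does not exchange $a$ and $b$. Both $v_{i-2}v_{i-1}$ and $v_{i+1}v_{i+2}$ carry $b$, so $v_i$ misses $b$ in either choice.
\end{itemize}

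\textbf{The missing idea is to look at the chain before reinserting $v_i$.}
\begin{itemize}
\item In the colouring of $H'$, every vertex has exactly one $b$-edge on the even cycle, and $v_j$ has no $3$-edge. So the $(b,3)$-component of $v_j$ is a path $P$ with exactly one other end $x$.
\item Both $v_{i-1}$ and $v_{i+1}$ have degree $1$ in the $(b,3)$-subgraph of $H'$. So each lies on $P$ only if it equals $x$, and at most one of them is on $P$.
\item Give colour $3$ to the edge $v_iv_{i\pm1}$ whose endpoint $v_{i\pm1}$ is not $x$, and colour $a$ to the other edge at $v_i$. The only new $(b,3)$-edge is then incident to a vertex off $P$. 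Hence $P$ is still the whole Kempe component of $v_j$ and avoids $v_i$.
\item Swapping $b$ and $3$ on $P$ frees $b$ at $v_j$, while $v_i$ still misses $b$. If $x=v_{i-1}$, that vertex ends up seeing $3$ and $a$, which is proper. Now colour $v_iv_j$ with $b$.
\end{itemize}

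This is the same mechanism as the paper's final repair. There, a red/blue chain ending at $v_{i-1}$ is fixed by moving red from $v_{i-1}v_i$ to $v_iv_{i+1}$. Without this argument, or an equivalent one, your proof of Condition 3 is incomplete.
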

\begin{proof}
\textbf{Cases 1. and 2.} Without loss of generality we have $d(v_{i-1})=d(v_{j-1})=2$ and $j=i+2n$ (the other situations are symmetric). We will start with precoloring the graph $H$: we color with red all the chords and the edge $v_{i-1}v_i$, and the rest of the cycle with green and blue alternately, starting from $v_iv_{i+1}$. The only color conflict is caused in $v_i$ by chord $v_iv_j$. So we change colors on the red/blue-path starting from this chord. Note that this path ends in $v_{j-1}$, because $j=i+2n$.

\textbf{Case 3.} We start with precoloring as in the previous case. If there is no chord in the vertex $v_i$, we are done. Otherwise we change colors on the red/blue-path starting from this chord. If it ends in some vertex $v\neq v_{i-1}$, we are done also. But if it ends in vertex $v_{i-1}$, we still have color conflict, because edges $v_{i-2}v_{i-1}$ and $v_{i-1}v_i$ are both colored with red. Then we recolor $v_{i-1}v_i$ with green and $v_{i}v_{i+1}$ with red, obtaining a proper coloring. 

\begin{figure}[htb]
    \centering    
    \begin{subfigure}{0.4\textwidth}
    \centering  
    \includegraphics[width=1\linewidth]{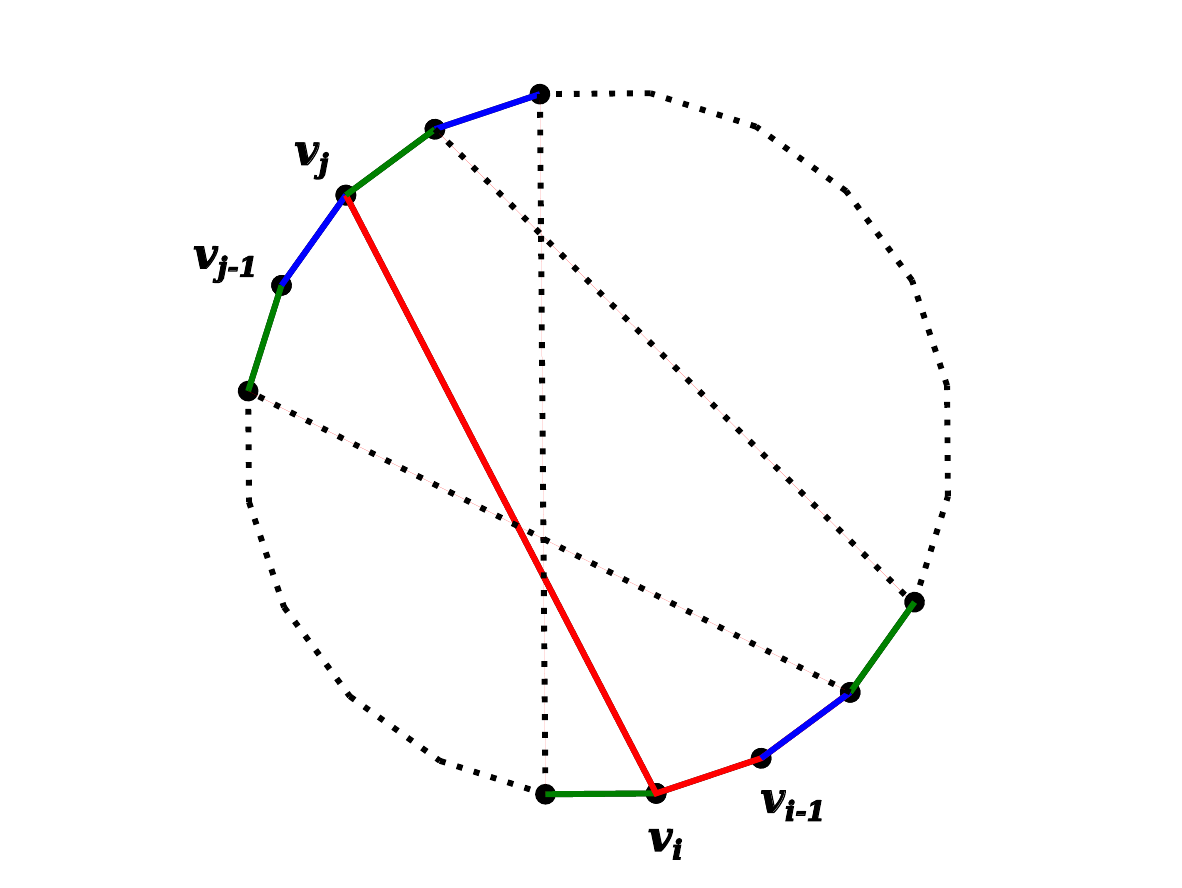} 
    \caption{Precoloring in Cases 1. and 2.\label{pic:alg1}}
    \end{subfigure}
    \begin{subfigure}{0.4\textwidth}
    \centering  
    \includegraphics[width=1\linewidth]{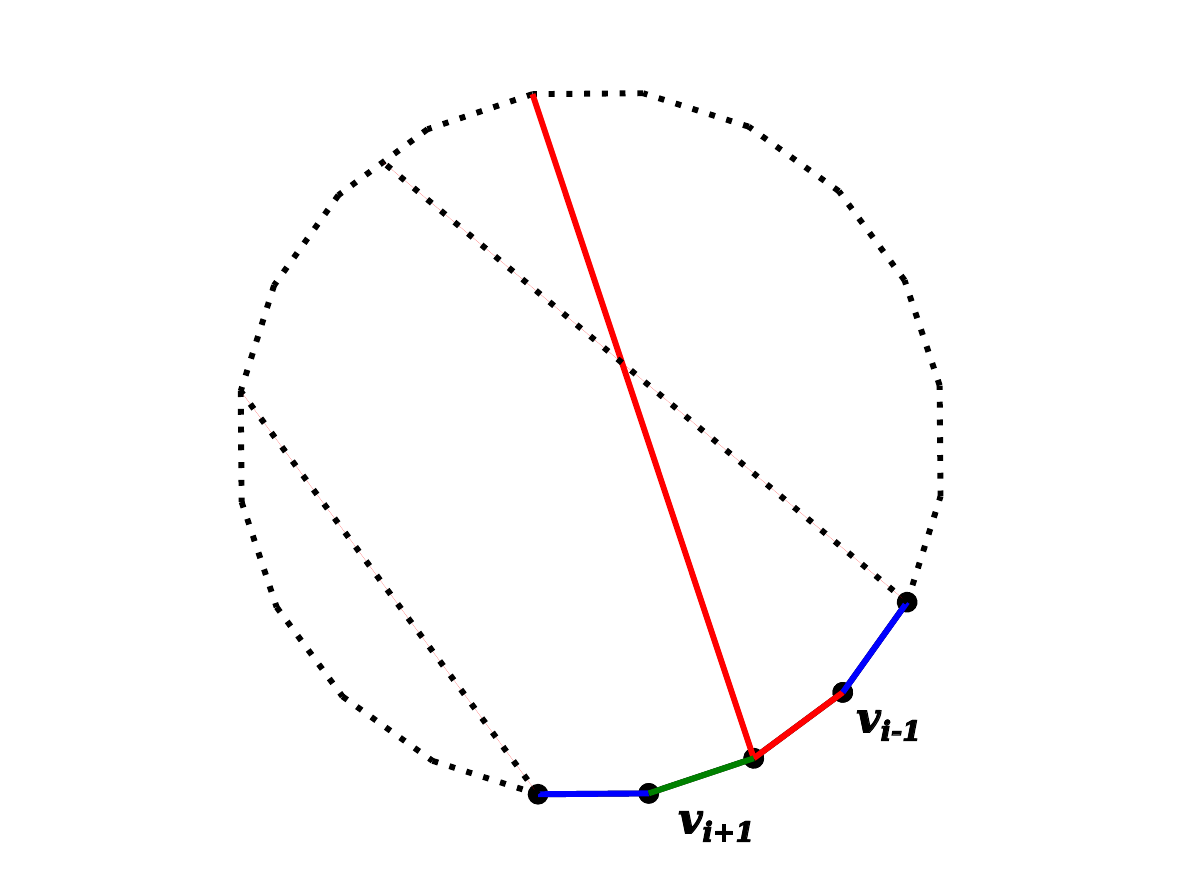}
    \caption{Precoloring in Case 3.\label{pic:alg2}}
    \end{subfigure}
    \caption{Precoloring used by the algorithm in Lemma \ref{algorithm}.}
\end{figure}

\end{proof}

The next lemma allows us to color the edges of a near-Hamiltonian graphs.

\begin{Lemma}\label{algorithm2}
    Let $H$ be a connected, subcubic, triangle-free graph, and let $v \in (G)$ be such a vertex that $d(v)=2$ and $G-v$ has Hamiltonian cycle $v_1\ldots v_{2p}$. Let $v_i, v_j$, where $|i-j| \geq 2$, be the neighbors of $v$ on the cycle. If
    \begin{enumerate}
        \item one of the vertices $v_{i-1}, v_{i+1}, v_{j-1}, v_{j+1}$ is of degree $2$ or
        \item there is a chord $v_{i-1}v_l$ such that either $v_{l+1}$ is of degree $2$, or such that $v_{l-1}$ is of degree $2$ and $|i-l| = 2n$  (for some $n \in \mathbb{N}$) or
        \item there is a chord $v_{i+1}v_l$ such that either $v_{l-1}$ is of degree $2$, or such that $v_{l+1}$ is of degree $2$ and $|i-l| = 2n$ (for some $n \in \mathbb{N}$),
    \end{enumerate}
    then $\chi'(G)=3$.
\end{Lemma}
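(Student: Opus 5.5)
Our plan is to argue exactly as in Lemma \ref{algorithm}: precolor the edges by a simple rule, locate the single conflict this rule creates, and remove it with one Kempe-chain interchange, using the hypothesis to make sure the chain cannot cause a new conflict.

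The precoloring is as follows. The cycle $v_1\ldots v_{2p}$ is even, so we color its edges green and blue alternately. Every chord (an edge of $G-v$ not on the cycle) gets red; since $H$ is subcubic, each $v_k$ is incident to at most one chord, so the chords form a matching and the coloring of $G-v$ is proper. The two edges $vv_i$ and $vv_j$ still have to be colored. Each of $v_i,v_j$ already sees green and blue on the cycle, and has a chord only if its degree is $3$. But $v$ uses its third slot, so $v_i$ and $v_j$ carry no chord. Hence both $vv_i$ and $vv_j$ would like red, and the only conflict is at $v$ itself.

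\textbf{Case 1.} Say $d(v_{i+1})=2$; the other choices are symmetric. Recolor $vv_i$ with red and look for a color for $vv_j$ that differs from red and from the colors at $v_j$. The natural tool is a rotation along the cycle. We recolor the cycle edge $v_iv_{i+1}$ red, which is allowed because $v_{i+1}$ has no chord. Then $vv_i$ takes the old color of $v_iv_{i+1}$ (say green). At $v_{i+1}$ the edge $v_{i+1}v_{i+2}$ is blue, so there is no clash there. Now $vv_j$ can be red, unless $v_j$ already sees red. That cannot happen: $v_j$ has no chord, and $v_iv_{i+1}$ is not incident to $v_j$ because $v_j\neq v_{i+1}$ (as $|i-j|\ge2$). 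So the coloring is proper.

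\textbf{Cases 2 and 3.} Color $vv_i$ red and $vv_j$ with the color $c\in\{\text{green},\text{blue}\}$ that $v_iv_{i-1}$ (resp.\ $v_iv_{i+1}$) does not use at $v_i$, and then repair. Alternatively, and more uniformly, reduce to Lemma \ref{algorithm}: consider the red/$c$ Kempe chain starting at $vv_j$. It alternates between chords and cycle edges of color $c$. The hypothesis on the chord $v_{i-1}v_l$ ensures this chain ends at a degree-$2$ vertex $v_{l\pm1}$, where swapping the colors on it introduces no conflict. The parity condition $|i-l|=2n$ plays the same role as $j=i+2n$ in Lemma \ref{algorithm}: it guarantees the chain enters $v_{l-1}$ through an edge of the right color. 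Once the chain is swapped, $vv_j$ can receive a color different from the color of $vv_i$.

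The main obstacle is this parity bookkeeping. We have to verify, for each of the four subcases (chord at $v_{i-1}$ or at $v_{i+1}$, degree-$2$ vertex at $l+1$ or at $l-1$), that the chain actually terminates at the intended vertex and not back at $v$. We would settle this by fixing the alternating coloring so that $v_{i-1}v_i$ is green and then tracking the colors of the cycle edges by index parity.
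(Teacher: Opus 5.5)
Your Case 1 is correct and is the paper's colouring. The gap is in Cases 2 and 3. You claim the hypothesis forces the red/$c$ chain to end at the degree-$2$ vertex $v_{l\pm1}$, but this fails in half of the allowed configurations.

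Because you keep the whole Hamiltonian cycle alternately green/blue, the chain $v\,v_i\,v_{i+1}\,v_l$ (for a chord $v_{i+1}v_l$) has no choice at $v_l$. It must continue along the cycle edge at $v_l$ that has the colour of $v_iv_{i+1}$: to $v_{l+1}$ if $l-i$ is even, to $v_{l-1}$ if $l-i$ is odd. Symmetrically, for a chord $v_{i-1}v_l$ it goes to $v_{l-1}$ iff $l-i$ is even. So you only cover ($d(v_{l-1})=2$, $l-i$ odd) and ($d(v_{l+1})=2$, $l-i$ even). The statement, however, allows $d(v_{l-1})=2$ in Case 3 (resp. $d(v_{l+1})=2$ in Case 2) with no parity condition. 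When $|i-l|$ is even, the chain bypasses the degree-$2$ vertex and can return to $v_j$, closing an odd red/$c$ cycle through $v$, where no interchange helps.

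This actually happens. Take the cycle $v_1\ldots v_{10}$, let $v$ be adjacent to $v_1,v_6$, and add the chords $v_2v_5$, $v_3v_8$, $v_7v_{10}$. This graph is subcubic and triangle-free, is not in Case 1, and satisfies Case 3 via $v_2v_5$ with $d(v_4)=2$ ($i=1$, $l=5$). In your precolouring the red/green and red/blue components through $v$ are the $5$-cycles $v\,v_1\,v_2\,v_5\,v_6$ and $v\,v_1\,v_{10}\,v_7\,v_6$, so neither interchange works. Separately, your choice of $c$ as the colour that $v_iv_{i-1}$ does \emph{not} use sends the chain from $v_i$ away from the chord $v_{i-1}v_l$, not towards it.

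The missing idea is to give up global alternation and let it break at the degree-$2$ vertex. For a chord $v_{i+1}v_l$ with $d(v_{l-1})=2$, the paper colours $v_iv_{i+1}$ and $v_{l-1}v_l$ red and the chord $v_{i+1}v_l$ blue. The rest of the cycle splits into the paths $v_{i+1}\ldots v_{l-1}$ and $v_l\ldots v_i$. Each is coloured alternately green/blue, starting with green at $v_{i+1}$, resp. $v_l$. Their other ends are unconstrained: $v_{l-1}$ has degree $2$, and $vv_i$ takes whatever colour is missing at $v_i$. So no parity is needed, and $vv_j$ and all remaining chords get red. On the example above this gives a proper $3$-edge-colouring.

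When instead $d(v_{l+1})=2$, you colour $v_lv_{l+1}$ red. Now both ends that must be green, $v_{i+1}$ and $v_l$, lie on the same path $v_{i+1}\ldots v_l$ with $l-i-1$ edges. This is exactly where $|i-l|$ even is used. Case 2 is the mirror image.
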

\begin{proof}

\textbf{Case 1.}
Suppose $v_{i+1}$ is of degree $2$. We color $v_iv_{i+1}$ with red and the rest of cycle alternately with green and blue. Then we can color $vv_j$ and the chords of the cycle with red and $vv_i$ with blue without color conflict (see Figure \ref{pic:alg3}). In other cases the coloring is analogous.

\textbf{Cases 2 and 3.}
Suppose there exists a chord $v_{i+1}v_l$ and $v_{l-1}$ is of degree $2$. We color $v_iv_{i+1}, v_lv_{l-1}$ with red and $v_{i+1}v_l$ with blue. Then we can - without color conflict - color the rest of the cycle alternately with blue and green, $vv_j$ and the rest of chords with red, and $vv_i$ with remaining color. In other cases the coloring is analogous.

\begin{figure}[htb]
    \centering    
    \begin{subfigure}{0.4\textwidth}
    \centering  
    \includegraphics[width=1\linewidth]{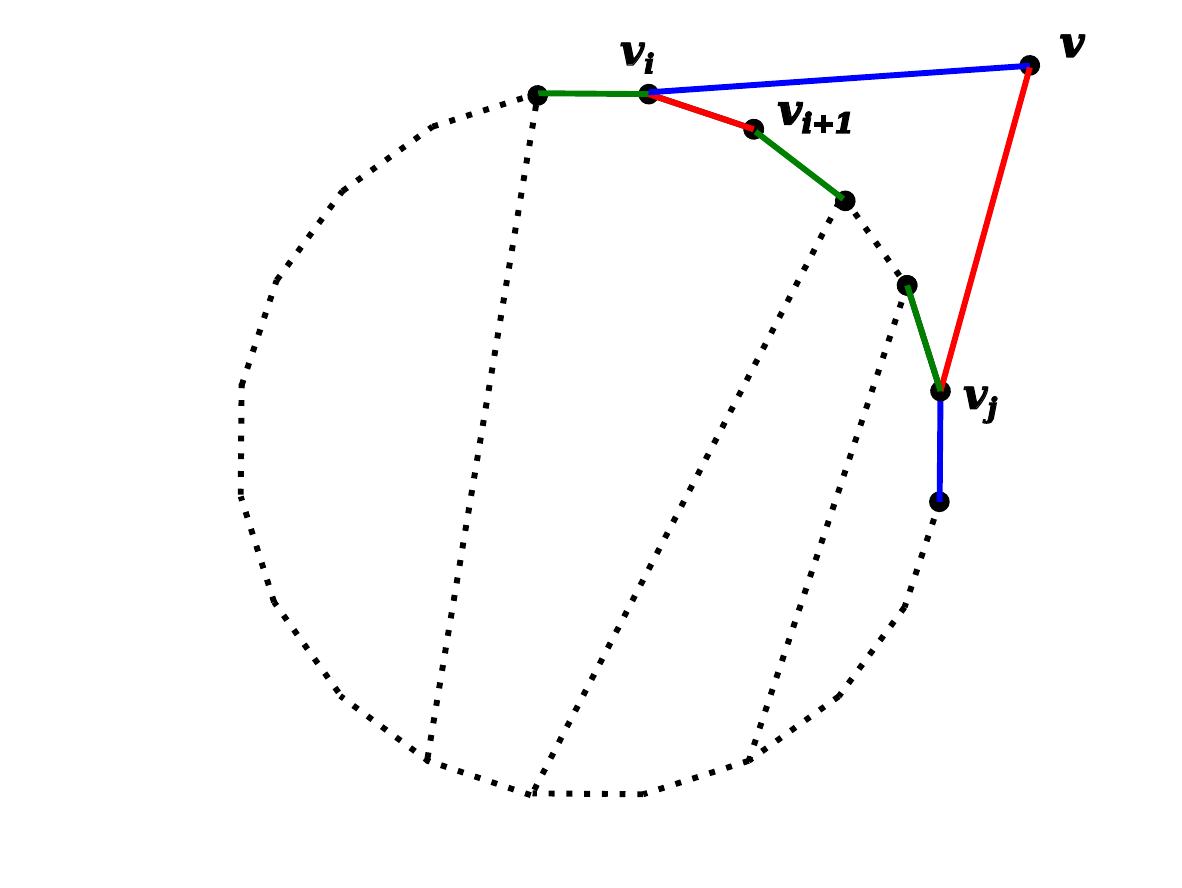} 
    \caption{Case 1.\label{pic:alg3}}
    \end{subfigure}
    \begin{subfigure}{0.4\textwidth}
    \centering  
    \includegraphics[width=1\linewidth]{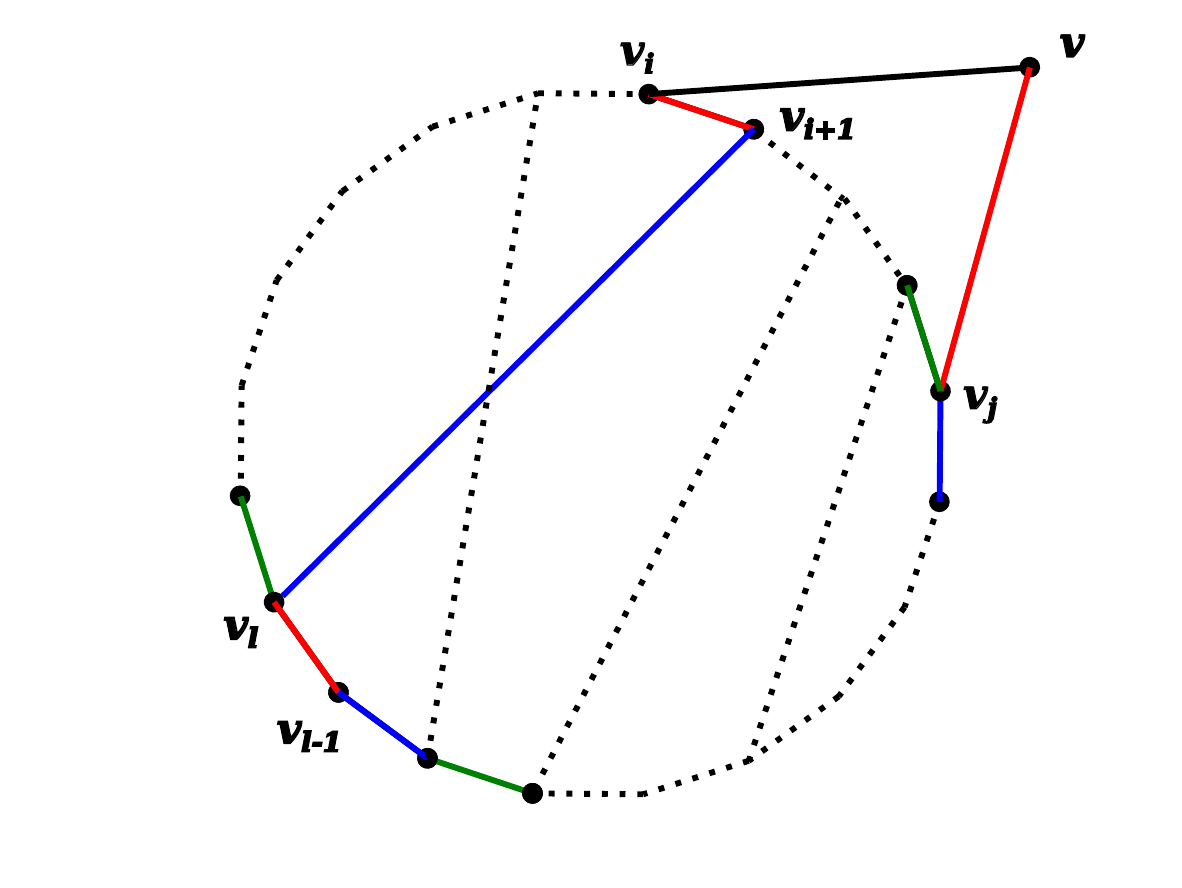}
    \caption{Cases 2 and 3.\label{pic:alg4}}
    \end{subfigure}
    \caption{Coloring in the proof of Lemma \ref{algorithm2}.}
\end{figure}

\end{proof}

In the case considered in this Section, the set of non-trivial exceptional graphs is more varied. It  contains the graph $B_{10}$, a line graph of $P^*$, and line graphs of all subcubic $9$-cycles with four chords. It is easy to observe that all graphs of the last group are overfull.

As before, the alternative stated in the theorem can be read as necessary and sufficient conditions for $3$-colorability of a $(claw, diamond, N_{2, 2, 2})$-free graph containing a cycle $C_5$. Here, the graph $P^*$ is the Petersen graph with one vertex deleted.

\begin{theorem}\label{thm::N222C5}
      Let $G$ be a connected, $(claw, diamond, N_{2, 2, 2})$-free graph. If $G$ contains an induced $C_5$, then
    \begin{enumerate}
        \item $G$ contains $K_4$ or
        \item $G$ contains $B_{10}$ or
        \item $G$ contains $L(P^*)$ or
        \item $G$ contains a line graph of an overfull graph $H$ consisting of an odd cycle $C_9$ with $4$ chords or
        \item $G$ is $3$-colorable.        
    \end{enumerate}
\end{theorem}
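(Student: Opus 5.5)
We pass to the root graph, as in the proofs of Theorems \ref{thm::N113} and \ref{thm::N222lin}. Assume $G$ is $K_4$-free. By Theorem \ref{linegraphs}, $G=L(H)$ for a connected, subcubic, triangle-free graph $H$. Forbidding an induced $N_{2,2,2}$ in $G$ means $H$ contains no (not necessarily induced) $S_{3,3,3}$. An induced $C_5$ in $G$ comes from a $5$-cycle $Q=v_1\ldots v_5$ in $H$, which is induced because $H$ is triangle-free. We apply Reductions 1--4 to $H$ throughout. The goal is to show that $H$ is one of $D_7=K_{3,3}^*$, $P^*$, or an overfull $9$-cycle with four chords, or else that $\chi'(H)=3$. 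The line graphs of the three exceptions are exactly $B_{10}$, $L(P^*)$ and item 4 of the statement.

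\textbf{Step 1: $H$ stays close to $Q$.} Since $H$ has no $S_{3,3,3}$ and $Q$ is short, we first show $N_3(Q)=\emptyset$ or at least that $N_3(Q)$ is very restricted. A vertex at distance $3$ from $Q$, together with a path back to $Q$, the two cycle branches, and a further neighbor supplied by Reductions 1 and 2, should already yield $S_{3,3,3}$. Next we study $N_2(Q)$ and the edges inside $N(Q)$, as in Case 2 of Theorem \ref{thm::N113}, using $\Delta\le 3$ and triangle-freeness. Each vertex of $Q$ has at most one neighbor off $Q$, so $|N(Q)|\le 5$. It follows that $|V(H)|$ is bounded, probably by about $10$--$12$ vertices, with $N_2(Q)$ small and $N_3(Q)$ small or empty.

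\textbf{Step 2: case split on the structure outside $Q$.}
\begin{itemize}
\item If $N_2(Q)=\emptyset$, we repeat the argument of Theorem \ref{thm::N113}. $H[N(Q)]$ is tightly constrained, and either $K_{3,3}^*$ appears or $H$ is a subgraph of an explicitly colorable graph.
\item If $N_2(Q)\neq\emptyset$, then $H$ usually has a Hamiltonian cycle of odd length, or a vertex of degree $2$ whose deletion leaves an even Hamiltonian cycle. Here we invoke Lemma \ref{algorithm} or Lemma \ref{algorithm2}, checking that one of their degree-$2$/chord conditions holds. Reduction 2 forces degree-$2$ vertices to be spread out, and this is what typically supplies the needed configuration.
\item The surviving cases should be essentially cubic or near-cubic graphs on $9$ or $10$ vertices. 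If $H$ is cubic on $10$ vertices with girth $5$, it is the Petersen graph, which contains $S_{3,3,3}$ and is therefore excluded. On $9$ vertices, the graph $P^*$ arises, which is Class 2. If $H$ has $9$ vertices and $13$ edges, it has a Hamiltonian $9$-cycle with four chords and is overfull, since $13>4\cdot 3$.
\end{itemize}

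\textbf{Main obstacle.} The hard part is the exhaustive analysis of the finite configurations in Step 2. We must make sure every $S_{3,3,3}$-free configuration is either caught by Lemma \ref{algorithm} or Lemma \ref{algorithm2}, or is one of the listed exceptions. Two issues need care. Non-Hamiltonian configurations must be recognised as containing $K_{3,3}^*$. Near-cubic $9$-vertex graphs must be separated into those containing $P^*$, the overfull ones, and the colorable ones. First we would try to show that any remaining $H$ with $|V(H)|\ge 9$ has a Hamiltonian cycle through $Q$'s neighborhood. Then the only non-lemma cases are those with no degree-$2$ vertex placed as the lemmas require, and we would enumerate these directly.
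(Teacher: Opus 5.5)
Your passage to the root graph $H$ and your list of the three exceptional root graphs match the paper. However, the proposal never gives the argument that carries the proof, namely that $|V(H)|\ge 10$ is impossible. In Step~1 you only assert that a vertex of $N_3(Q)$ ``should'' yield an $S_{3,3,3}$ and that $|V(H)|$ is ``probably'' at most $10$--$12$. Since $Q$ has only five vertices, a path $v_izyx$ into $N_3(Q)$ together with the two branches of $Q$ at $v_i$ gives legs of lengths $3,2,2$ only. Extending both short legs needs two more vertices outside $Q$, and these may coincide with $z,y,x$ or with the second attachment of their component. So nothing follows without a genuine case analysis. The missing idea is a component count. You already note that at most five edges leave $Q$; by Reduction~4 each component of $H-Q$ receives at least two of them, so $H-Q$ has at most two components. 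A single-vertex component is removed by Reduction~3 and a re-choice of the $5$-cycle. The paper then analyses $H-Q$: two components; a path or a cycle; or $\Delta(H-Q)=3$ with a spanning tree containing $S_{1,1,2}$. In every case $|V(H)|\ge 10$ produces one of three explicit $S_{3,3,3}$-configurations. The correct bound is $|V(H)|\le 9$. Your Petersen remark disposes of only one $10$-vertex graph, not of non-cubic ones or larger ones.

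Second, the branch ``if $N_2(Q)=\emptyset$, repeat the argument of Theorem~\ref{thm::N113}'' is wrong, not merely incomplete. That argument uses $S_{2,2,4}$-freeness to forbid two disjoint edges in $N(Q)$ and to force $|N(Q)|\le 3$. $S_{3,3,3}$-freeness does not give this once $|V(H)|\ge 9$. Take $H=P^*$ with $Q$ the outer $5$-cycle of the Petersen graph. Then $N_2(Q)=\emptyset$ and $H-Q\cong P_4$. Also $H$ contains no $K_{3,3}^*$, since it has girth $5$, yet $\chi'(H)=4$. Similarly, an overfull $9$-cycle with four chords arises from $H-Q\cong C_4$ joined by a matching to four vertices of $Q$, again with $N_2(Q)=\emptyset$. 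So the new exceptions sit exactly in the branch you propose to settle by the old argument.

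Theorem~\ref{thm::N113} may be invoked only when $|V(H)|\le 8$, where $S_{2,2,4}$-freeness is automatic because $S_{2,2,4}$ has nine vertices. The $9$-vertex case must be done explicitly: $H-Q\in\{P_4,C_4,K_{1,3}\}$. In each case one exhibits either a Hamiltonian $9$-cycle or an $8$-cycle plus a degree-$2$ vertex satisfying Lemma~\ref{algorithm2}. Lemma~\ref{algorithm} applies to the $9$-cycle unless $H$ is overfull or $P^*$. Your assertion that a $9$-vertex graph with $13$ edges has a Hamiltonian $9$-cycle is unproved. Your plan to ``enumerate directly'' does not replace this case analysis.
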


Theorem \ref{thm::N222C5} will follow as a corollary from Theorem \ref{thm::N222C5lin}.

\begin{theorem}\label{thm::N222C5lin}
      Let $H$ be a connected, triangle-free subcubic graph, minimal with respect to Reductions 1-4. If $H$ is also $S_{3, 3, 3}$-free and $H$ contains an induced $C_5$, then
    \begin{enumerate}
        \item $H$ is isomorphic to $D_7$ or
        \item $H$ is isomorphic to $P^*$ or
        \item $H$ is an overfull graph consisting of an odd cycle $C_9$ with $4$ chords or
        \item $H$ is $3$-edge-colorable.        
    \end{enumerate}
\end{theorem}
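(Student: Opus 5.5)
We fix an induced $5$-cycle $Q=v_1v_2v_3v_4v_5$ in $H$ and analyse the neighbourhood layers $N_1(Q),N_2(Q),\dots$, exactly as in Case 2 of the proof of Theorem \ref{thm::N113}. Here the forbidden configuration is a (not necessarily induced) $S_{3,3,3}$, which is weaker than $S_{2,2,4}$. Our goal is to show that $H$ is small, with $N_{\ge 3}(Q)=\emptyset$ and $|V(H)|$ bounded by roughly $10$–$11$.

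\textbf{Bounding the layers.} First, a vertex $x\in N_3(Q)$ has, by Reduction 1 and triangle-freeness, a second neighbour. Following a shortest path $x$--$N_2$--$N_1$--$v_i$ and taking the centre at $v_i$ (two legs of length $3$ on the cycle $Q$, one leg going outward) we obtain an $S_{3,3,3}$. Hence $N_3(Q)=\emptyset$. The same centre-on-$Q$ argument shows that two vertices $u,u'\in N_1(Q)$ with non-adjacent or distinct attachments, each having private continuations of length $2$, yield an $S_{3,3,3}$. So the structure of $N_1\cup N_2$ is very restricted: essentially every $w\in N_2(Q)$ has all its neighbours in $N_1(Q)$, and $N_2(Q)$ is independent or almost so.

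\textbf{Enumerating the candidates.} Since $Q$ has only $5$ vertices, each with at most one external neighbour, we have $|E(Q,N_1(Q))|\le 5$, so $|N_1(Q)|\le 5$. Together with the restricted $N_2(Q)$ this leaves $H$ with at most about $11$ vertices. Reductions 1–4 force minimum degree $2$, no two adjacent degree-$2$ vertices, no $4$-cycle with two opposite degree-$2$ vertices, and no bridges. Under these constraints we enumerate the maximal configurations, splitting by $|N_1(Q)|\in\{1,\dots,5\}$ and by $|N_2(Q)|$.

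\textbf{Coloring the survivors.} For each candidate we either recognise one of the exceptions or produce a $3$-edge-coloring. The exceptions are $D_7=K_{3,3}^*$, the graph $P^*$ (the case $|N_1(Q)|=5$ with the five outer vertices joined through $N_2(Q)$ as in the Petersen graph minus a vertex), and the overfull $9$-vertex graphs arising as $C_9$ plus $4$ chords; the latter are overfull by Observation \ref{easyfact}, since $13>4\cdot 3$. To color the remaining graphs efficiently, we find a Hamiltonian cycle of odd length, to which Lemma \ref{algorithm} applies, or a degree-$2$ vertex whose removal leaves a Hamiltonian graph, to which Lemma \ref{algorithm2} applies. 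In each case we check one of the degree-$2$ adjacency conditions. When $|V(H)|$ is even and $H$ is not Hamiltonian-like, we color directly, as in Figures \ref{pic:thm::s224_12}–\ref{pic:thm::s224_13}.

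\textbf{Main obstacle.} The hardest part is the case analysis in the enumeration step: ruling out the $S_{3,3,3}$ requires careful choice of the centre, sometimes in $N_1(Q)$ rather than on $Q$, and we must make sure that no non-colourable case is missed. The cases where $H$ has $9$ vertices need the most care. There we must show that every non-overfull configuration satisfies a hypothesis of Lemma \ref{algorithm}, and that the only failures are exactly the cubic-minus-one-vertex graphs on $C_9$ with $4$ chords. I would handle this by fixing a Hamiltonian $9$-cycle through $Q$ and its neighbours, and then distinguishing the cases by the position of the unique degree-$2$ vertex, or of the degree-$2$ vertices, relative to the chords.
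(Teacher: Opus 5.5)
\textbf{There is a genuine gap in how you bound $|V(H)|$.} This bound is the real content of the theorem. Since $S_{3,3,3}$ has $10$ vertices, the hypothesis is vacuous for $|V(H)|\le 9$, so everything hinges on showing that $|V(H)|\ge 10$ forces an $S_{3,3,3}$.

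Your argument for $N_3(Q)=\emptyset$ centres the spider at $v_i\in Q$ with two legs of length $3$ inside $Q$. But $Q$ has only four vertices besides $v_i$, so the legs of a spider centred at $v_i$ can use at most four vertices of $Q$ in total. This is why the centre-on-$Q$ trick sufficed for $S_{2,2,4}$ in Theorem \ref{thm::N113} but does not transfer here. With centre $v_i$, at most one leg can lie entirely in $Q$, so at least two legs must use vertices of $H-Q$. A typical configuration is $v_iv_{i-1}v_{i-2}v_{i-3}$ along $Q$, with $v_i$ and $v_{i+1}$ starting disjoint outward paths carrying three and two vertices of $H-Q$ (or a variant as in Figure \ref{pic:thm::N222C5}).

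Proving that such a configuration always exists is the paper's Case 1:
\begin{enumerate}
\item At most five edges leave $Q$, and by Reduction 4 each component of $H-Q$ receives at least two of them, so there are at most two components.
\item An isolated-vertex component is eliminated by re-choosing the $5$-cycle, using Reduction 3.
\item Two components with $|V(H)|\ge 10$ already give the configuration.
\item A connected $H-Q$ is handled by separating $\Delta(H-Q)=2$ (path or cycle) from $\Delta(H-Q)=3$, using a spanning tree rooted at a degree-$3$ vertex.
\end{enumerate}
Your substitutes (``$N_2(Q)$ is independent or almost so'', ``at most about $11$ vertices'') are neither proved nor sharp. Your list of exceptions contains nothing on $10$ or more vertices, so you would still owe exactly this argument.

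\textbf{Your handling of $P^*$ contains an error that would surface in the $9$-vertex case.} $P^*$ has $9$ vertices, so $|V(H)\setminus Q|=4$, and the configuration $|N_1(Q)|=5$ you attribute to it cannot occur. In fact, for every $5$-cycle $Q$ of $P^*$ (all are induced, since the girth is $5$), the graph $P^*-Q$ is a $P_4$ contained in $N_1(Q)$.

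Moreover, $P^*$ is Hamiltonian, because the Petersen graph is hypohamiltonian; so it is a $C_9$ with three chords. It has $12=4\cdot 3$ edges, so it is not overfull, yet it is not $3$-edge-colourable. Hence your planned claim, that every non-overfull $9$-vertex configuration meets a hypothesis of Lemma \ref{algorithm}, is false. $P^*$ is precisely the non-overfull graph that this step must leave over, alongside the overfull $C_9$-plus-$4$-chords graphs.

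Finally, for $|V(H)|\le 8$ no fresh enumeration is needed. Such an $H$ cannot contain $S_{2,2,4}$, which has $9$ vertices, so Theorem \ref{thm::N113} applies and gives $D_7$ or a $3$-edge-colouring.
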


\begin{proof}
By assumption, $H$ contains a $5$-cycle  $Q=v_1v_2 v_3 v_4 v_5$. By Reduction 4 every component of $H - Q$ is connected to $Q$ with at least $2$ edges. Thus, we have at most $2$ components. 

Suppose now one of those components is an isolated vertex $\{w\}$. Then there exists such an index $i$ that the vertex $w$ is adjacent to $v_{i-1}$ and to $v_{i+1}$. 
But then by the Reduction 3 there must be a vertex $u$ outside the cycle, adjacent to $v_i$, and without loss of generality we can take the $5$-cycle $Q\setminus \{v_i\} \cup \{w\}$ and consider a component consisting of at least $v_i$ and $u$. So we will assume that any component has at least one edge.

Note that to find an $S_{3,3,3}$ it is sufficient to find one of the subgraphs depicted in Figure~\ref{pic:thm::N222C5}.

\begin{figure}[htb]
    \centering    
    \begin{subfigure}{0.3\textwidth}
    \centering  
    \includegraphics[width=1\linewidth]{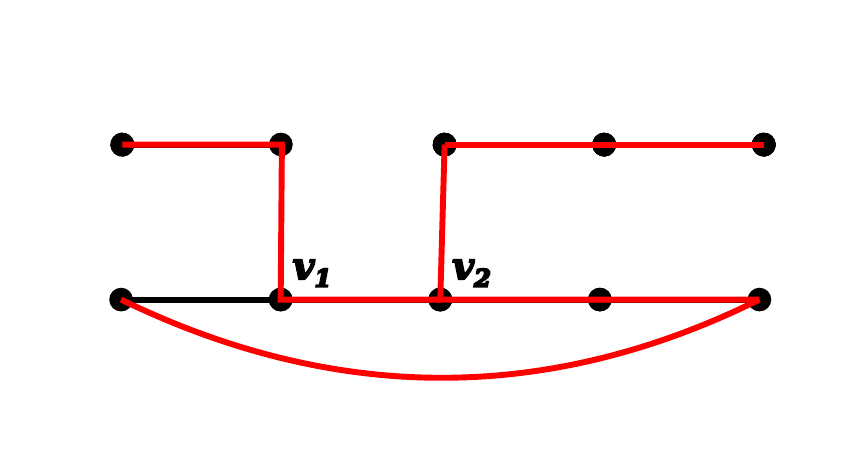} 
    \caption{}\label{pic:thm::N222C5-pic1}
    \end{subfigure}
    \begin{subfigure}{0.3\textwidth}
    \centering  
    \includegraphics[width=1\linewidth]{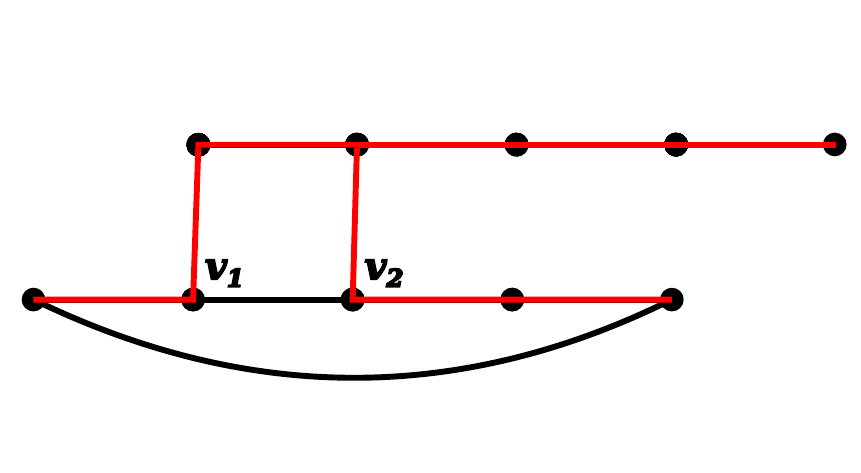}
    \caption{}\label{pic:thm::N222C5-pic2}
    \end{subfigure}
    \begin{subfigure}{0.3\textwidth}
    \centering  
    \includegraphics[width=1\linewidth]{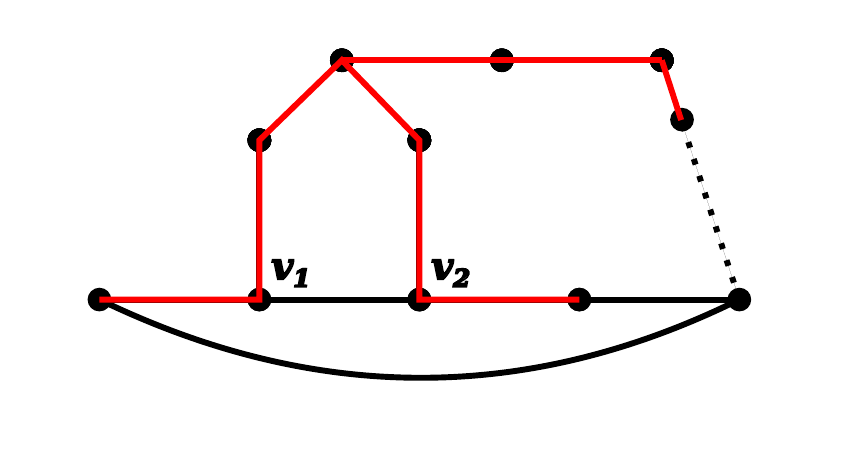}
    \caption{}\label{pic:thm::N222C5-pic3}
    \end{subfigure}
    \caption{Subgraphs occurring in the graphs considered in Case 1 of the proof of Theorem \ref{thm::N222C5}. }\label{pic:thm::N222C5}
\end{figure} 

\textbf{Case 1.} $|V(H)| \geq 10$. 

Recall that by Reductions each component of $H - Q$ is connected to $Q$ with at least $2$ edges and there are no vertices of degree 1. If we have $2$ components, say $C_1, C_2$, such that $|C_1| \le |C_2|$, then by the assumptions $2 \le |C_1|, 3 \le |C_2|$, and we can always find a $S_{3, 3, 3}$ (see Figure \ref{pic:thm::N222C5-pic1}). Thus, the graph $H-Q$ is connected. 

Let us consider the maximum degree $\Delta(H - Q)$. If we have $\Delta(H-Q) = 2$ and $H-Q$ is a cycle, then we always can find a subgraph \ref{pic:thm::N222C5-pic1}. If $\Delta(H-Q) = 2$ and $G-Q$ is a path, we always have one of subgraphs \ref{pic:thm::N222C5-pic1} or \ref{pic:thm::N222C5-pic2}, except when $H-Q$ is a path of length $5$ and $v_1, v_2$ are adjacent to its second and fourth vertex. But then by Reduction 1 both end-vertices of the path must be adjacent to the cycle and we can find another $i$ such that $v_i, v_{i+1}$ are adjacent to the path (so we can find a subgraph \ref{pic:thm::N222C5-pic1}).

If $\Delta(H-Q) = 3$, then there exists a spanning tree $T$ of $H-Q$ containing a star $S_{1, 1, 2}$. Let the vertex of degree $3$ of the star be a root of the tree $T$. Note that if we join two consecutive vertices $v_1, v_2$ with this spanning tree, we always obtain a subgraph \ref{pic:thm::N222C5-pic1}, except when $v_1, v_2$ are joined with two leaves of the same parent or when they are joined with a leaf and its parent.

Suppose $v_1, v_2$ are adjacent to two leaves $u_1, u_2$ of the same parent $u$ in $T$. If $u_1$ or $u_2$ is not of degree $1$ in $H-Q$, then we always can find a subgraph \ref{pic:thm::N222C5-pic1}, since from triangle-freeness of $H$ the vertex $u$ must have a parent (or another child, if $u$ is a root) non-adjacent to $u_1, u_2$.
If both $u_1$, $u_2$ are of degree $1$ in $H-Q$, then there always exist a path $P = u v v'$ starting from $u$, such that $u_1, u_2 \notin P$. Then whether $v'$ is adjacent to $u_1, u_2$, to the cycle or to another vertex of $H-Q$, we can always find one of the subgraphs \ref{pic:thm::N222C5-pic1} or \ref{pic:thm::N222C5-pic3}.

Suppose now $v_1, v_2$ are adjacent to the leaf $u$ and its parent $u'$. Then either we can find a subgraph \ref{pic:thm::N222C5-pic2} or $T = S_{1, 1, 2}$, where $u$ and $u'$ belong to the only one subdivided edge of the star. If one of the other leaves of $T$ is adjacent to $u$, we have a subgraph \ref{pic:thm::N222C5-pic1}. Otherwise the other leaves are adjacent to  $Q$ and we can find another $i$ such that $v_i$, $v_{i+1}$ are end-vertices of disjoint $P_3$ and $P_4$ as depicted in \ref{pic:thm::N222C5-pic1}.

\textbf{Case 2.} $|V(H)| = 9$

Note that $H - Q$ is a path $P_4$, a cycle $C_4$ or a star $K_{1,3}$. If $H-Q$ is a cycle, then by Reductions 2 and 3 there are at least $3$ edges between $Q$ and $H-Q$. Thus, we can always find a $9$-cycle. By Lemma \ref{algorithm} we know that $H$ is either an overfull graph of $13$ edges or is a $P^*$, or is $3$-edge-colorable. 

If $H-Q$ is a star $K_{1, 3}$, then by Reduction 1 all three leaves of $H-Q$ must be adjacent to $Q$. Since $\Delta(H) \leq 3$, at least one of the leaves has only one neighbor on the cycle. Whether there are $2$, $1$ or $0$ leaves with degree $2$ on the cycle, we can always find a $8$-cycle $Q'$ formed by $Q$, two leaves and the center of the star, such that the remaining leaf has degree $2$ on $Q'$. Then we can use Lemma \ref{algorithm2}. Since the length of the cycle is $8$, one of cases $1, 2, 3$ of Lemma  \ref{algorithm2} always occurs, so the graph is $3$-edge-colorable.  

If $H-Q$ is a path $P_4 = u_1 \dots u_4$, then by Reduction 1 both end-vertices must be adjacent to $Q$. If they are adjacent to two consecutive vertices of the cycle $Q$, then we can easily find $C_9$ and we use Lemma \ref{algorithm} as before, so assume otherwise - without loss of generality  $u_1v_1, u_4v_3 \in E(H)$. Since $H$ is triangle-free, we have $d(u_1) = d(u_4) = 2$. By Reduction 2 both vertices $u_2, u_3$ must be adjacent to the cycle $Q$. If one of them is adjacent to $v_2$, then we can find an $8$-cycle containing $Q$ and three vertices of $P_4$, where remaining end-vertex is of degree 2; if not, there is an $8$-cycle containing $P_4$ and four vertices of $Q$, where remaining vertex, $v_2$, is of degree 2. Then we use Lemma \ref{algorithm2} as before. 

\textbf{Case 3.} $|V(H)| \leq 8$

If $|V(H)| \leq 8$, then $H$ is $S_{2, 2, 4}$-free, so $L(H)$ is $N_{1,1, 3}$-free. Then the claim follows as a corollary from Theorem \ref{thm::N113}.
    
\end{proof}

\section{Further work}

{Recall, that Lozin and Purcell \cite{LozPur} have shown that the $3$-colorability problem remains $NP$-complete even for the 
class of $(claw, diamond, K_4, C_4, C_5, \ldots C_k)$-free graphs for any integer $k \geq 4.$}

Let us note that since obviously Reductions 1-4 can be applied in polynomial time, the following corollary results from Theorems \ref{thm::N222lin} and \ref{thm::N222C5lin}. 

\begin{corollary}
    Let $H$ be a connected, triangle-free subcubic graph. If $H$ is also $S_{3,3,3}$-free, then there is a polynomial algorithm deciding whether or not $H$ is $3$-edge-colorable.
\end{corollary}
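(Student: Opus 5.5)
The algorithm first makes $H$ irreducible, then decides using the structure given by Theorems \ref{thm::N222lin} and \ref{thm::N222C5lin}.

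\textbf{Reduction phase.} Starting from $H$, I will repeatedly apply Reductions 1--4 until none applies:
\begin{itemize}
\item delete a vertex of degree $1$;
\item delete two adjacent vertices of degree $2$;
\item delete an induced $4$-cycle with two independent vertices of degree $2$;
\item split at a cut-edge, treating the pieces separately, since a cut-edge in $H$ is a cut-vertex of $L(H)$ and colorings of the blocks can be permuted to agree.
\end{itemize}
Each step is checked in polynomial time and strictly decreases $|V(H)|+|E(H)|$, so there are at most linearly many steps. Each reduction preserves $3$-edge-colorability in both directions, which is the content of the statements of Reductions 1--4. Deleting vertices or edges keeps the graph triangle-free, subcubic and $S_{3,3,3}$-free, because these properties are closed under taking subgraphs. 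Hence $H$ is $3$-edge-colorable if and only if every resulting connected piece is, and every piece is connected, triangle-free, subcubic, $S_{3,3,3}$-free and minimal with respect to Reductions 1--4.

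\textbf{Decision phase.} Each piece $H'$ is handled by cases.
\begin{itemize}
\item If $H'$ contains no induced $C_5$, which can be checked by brute force in $O(n^5)$ time, then Theorem \ref{thm::N222lin} applies. $H'$ is either isomorphic to some $D_{6i+1}$ or $D_{6i+5}$, or it is $3$-edge-colorable.
\item If $H'$ contains an induced $C_5$, then Theorem \ref{thm::N222C5lin} applies. $H'$ is either $D_7$, $P^*$, an overfull $C_9$ with four chords, or $3$-edge-colorable.
\end{itemize}
The exceptional graphs are all non-$3$-edge-colorable. This follows from the overfull observation and the subdivided cubic bipartite observation, and for $P^*$ from the fact that the Petersen graph is a snark. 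So the answer for $H'$ is ``no'' exactly when $H'$ is one of them.

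\textbf{Recognizing the exceptional graphs.} $D_7$, $P^*$ and the $9$-vertex graphs have constant size, so they are recognized by brute force. The families $D_{6i+1}$ and $D_{6i+5}$ need a little more care, and I expect this to be the only real obstacle. I would recognize them structurally. Find a shortest odd cycle $Q$ by BFS. In the proof of Theorem \ref{thm::N222lin}, $H'$ decomposes as a necklace of gadgets along $Q$, and $D_{6i+1}$, $D_{6i+5}$ are exactly the necklaces consisting of gadgets (a) with either one subdivided edge or one gadget (d). Checking this takes linear time.

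A simpler route avoids recognition altogether. Since $H'$ is $3$-edge-colorable exactly when it is not exceptional, and the proofs are constructive, one can run the coloring procedures:
\begin{itemize}
\item the alternating coloring for bipartite graphs;
\item the gadget-by-gadget coloring of the necklace;
\item the Kempe-chain procedures of Lemmas \ref{algorithm} and \ref{algorithm2};
\item brute force on graphs of constant size.
\end{itemize}
If a procedure fails, $H'$ is declared non-colorable. Either way, the total running time is polynomial.
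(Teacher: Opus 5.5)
Your proposal is correct and follows the paper's route: the paper's whole justification is that Reductions 1--4 run in polynomial time and that Theorems \ref{thm::N222lin} and \ref{thm::N222C5lin} then leave only the listed exceptional graphs, and you additionally spell out the recognition step that the paper leaves implicit. That step is easier than a necklace parse: every exceptional graph other than the fixed graph $P^*$ is overfull, so testing overfullness with a maximum-matching computation, plus a constant-size isomorphism check against $P^*$, already decides each reduced piece.
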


Every graph different from $K_3$ and $K_{1,3}$ is uniquely determined by its line graph and can be restored from this line graph in polynomial time, so we can also state the following. 

\begin{corollary}
    Let $G$ be a connected, $(claw, diamond)$-free graph. If $G$ is also $N_{2, 2, 2}$-free, then there is a polynomial algorithm deciding whether or not $G$ is $3$-colorable.
\end{corollary}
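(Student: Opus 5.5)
This corollary should follow directly from Theorems \ref{thm::N222lin} and \ref{thm::N222C5lin} together with Theorem \ref{linegraphs}, so the plan is to assemble a polynomial algorithm out of them. Given a connected $(claw, diamond, N_{2,2,2})$-free graph $G$, I first test in polynomial time (for instance by brute force over all $4$-sets) whether $G$ contains $K_4$. If it does, $G$ is not $3$-colorable and we stop. Otherwise Theorem \ref{linegraphs} applies: $G = L(H)$ for a subcubic triangle-free graph $H$. Apart from the trivial exceptions $K_3$ and $K_{1,3}$, the root graph $H$ can be reconstructed from $G$ in polynomial time by a standard line-graph recognition algorithm (Roussopoulos, or Lehot). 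Since an induced $N_{2,2,2}$ in $L(H)$ corresponds to a (not necessarily induced) $S_{3,3,3}$ in $H$, the graph $H$ is $S_{3,3,3}$-free. By the Observation on line graphs, $G$ is $3$-colorable exactly when $\chi'(H)\le 3$.

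Next I decide $3$-edge-colorability of $H$, which is what the preceding corollary on $S_{3,3,3}$-free graphs provides. Concretely, I apply Reductions 1--4 exhaustively. Each reduction deletes vertices or splits at a cut-edge, so at most $|V(H)|$ rounds occur, and each round is polynomial: it amounts to checking degrees, induced $4$-cycles and bridges. The reductions preserve $3$-edge-colorability in both directions.

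One point needs care. The reductions, and especially Reduction 4 (splitting at a cut-edge and handling the pieces separately), may leave several components, so I treat each connected component separately. I also have to confirm that triangle-freeness, subcubicity and $S_{3,3,3}$-freeness survive. This is immediate, since every reduced graph is a subgraph of $H$, except in the cut-edge step. There I would argue that the coloring glues: permute colors in one side so that they agree on the bridge, after keeping the bridge edge attached as a pendant edge to each side (pendant edges are harmless by Reduction 1).

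Each resulting minimal component $H'$ then falls into one of the two theorems. If $H'$ is bipartite or contains no induced $C_5$, Theorem \ref{thm::N222lin} says it is $D_{6i+1}$, $D_{6i+5}$, or $3$-edge-colorable. If it contains an induced $C_5$, Theorem \ref{thm::N222C5lin} says it is $D_7$, $P^*$, an overfull $C_9$ with four chords, or $3$-edge-colorable. Each exceptional family is recognizable in polynomial time: the graphs with at most $9$ or $10$ vertices by brute force, and the necklace families $D_{6i+1}$, $D_{6i+5}$ by checking the explicit gadget structure along the cycle. So $H$ is $3$-edge-colorable if and only if no component is exceptional.

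The main obstacle I expect is the justification of Reduction 4 and the exact bookkeeping of the "if and only if" through all reductions. Once that is settled, the rest is routine composition of polynomial steps.
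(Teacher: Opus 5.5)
Your proposal is correct and follows the paper's own route. You reconstruct the root graph $H$ in polynomial time, apply Reductions 1--4, and read off the answer from Theorems~\ref{thm::N222lin} and~\ref{thm::N222C5lin}, while supplying details the paper leaves implicit: the $K_4$ test, the $N_{2,2,2}$/$S_{3,3,3}$ correspondence, why deleting a bridge preserves $3$-edge-colorability, and polynomial recognition of the exceptional graphs. The only slip is cosmetic: a bridge step removes an edge rather than a vertex, so the number of reduction steps is bounded by $|V(H)|+|E(H)|$ rather than $|V(H)|$, which is still polynomial.
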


{Moreover, we show that there are only finitely many non $3$-colorable $N_{1, 2, k}$-graphs for any $k \geq 2$, but there exist infinitely many non $3$-colorable $N_{i, j, k}$-graphs for any  $2 \leq i \leq j \leq k.$}

\begin{theorem}
    Let $G$ be a $(claw, diamond, N_{1,2,k})$-free graph with an integer $k \geq 2$. If $n > (k+4)(2^k+1)+1$, then 
    \begin{enumerate}
		\item $G$ contains $K_4$ or
				\item $G$ is $3$-colorable. 
	\end{enumerate}
\end{theorem}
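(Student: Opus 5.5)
We pass to the root graph $H$, as in the earlier proofs. Assume $G$ is connected and $K_4$-free; otherwise we work component by component, and a small non-$3$-colorable component is what the order bound is meant to rule out. By Theorem~\ref{linegraphs}, $G=L(H)$ for a connected, subcubic, triangle-free graph $H$. An induced $N_{1,2,k}$ in $G$ corresponds to a subgraph $S_{2,3,k+1}$ in $H$ (not necessarily induced): a vertex $x$ with three disjoint paths leaving it, of lengths $2$, $3$ and $k+1$. So $H$ contains no such spider. We must show that if $|E(H)|=n>(k+4)(2^k+1)+1$, then $H$ is $3$-edge-colorable.

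First we reduce $H$ with Reductions 1--4, taking care that they preserve the relevant size. Reductions 1 and 2 only delete pendant or degree-$2$ pieces, and a proper $3$-edge-coloring extends back. The danger is that reductions shrink $H$ to a small non-$3$-edge-colorable core. So instead of reducing, we argue directly on $H$ and use the reductions only locally.

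If $H$ is bipartite or $\Delta(H)\le 2$, then $\chi'(H)\le 3$. Otherwise take a shortest odd cycle $Q$. The plan rests on two claims.

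\textbf{Claim (a): $Q$ is short.} If $|Q|\ge k+4$ (roughly), then any vertex $x\in N(Q)$ attached at $v_i$ gives a spider: take the center at $v_i$, the two arcs of $Q$ as legs of lengths $3$ and $k+1$, and the edge to $x$ plus one more edge as the leg of length $2$. Hence every vertex near $Q$ has all of its neighbors on $Q$. With the minimality of $Q$ and triangle-freeness, this forces $H$ to be $Q$ plus vertices adjacent to $v_i,v_{i+2}$. Such graphs are $3$-edge-colorable by Lemma~\ref{algorithm}, using a Hamiltonian odd cycle through these vertices. So we may assume $|Q|\le k+3$.

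\textbf{Claim (b): $H$ stays near $Q$.} Every vertex lies within distance about $k$ of $Q$. A vertex $y$ at distance $\ge k+1$ from $Q$ gives a shortest path $P$ from $Q$ to $y$. Centering the spider at the attachment vertex on $Q$ gives two legs along $Q$; their lengths $2$ and $3$ fit because $|Q|\ge 5$. The third leg is $P$, of length $\ge k+1$. This is a forbidden $S_{2,3,k+1}$.

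Since $H$ is subcubic, the ball of radius $k$ around the at most $k+3$ vertices of $Q$ has at most $(k+3)(1+2+\dots+2^{k-1})+(k+3)\le (k+4)2^k$ vertices. Each vertex of $Q$ has at most one external edge, and the branching factor is $2$ thereafter. This gives $|E(H)|\le \tfrac32|V(H)|$, which is in the regime of the stated bound $(k+4)(2^k+1)+1$. I would tune the count so that the edges of $Q$ plus the edges of the breadth-first trees hanging from $Q$ are at most this quantity. Then $n$ above the bound is impossible unless $H$ is bipartite, in which case it is $3$-edge-colorable.

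The main obstacle is Claim (a), together with the interplay of the two claims. When $Q$ is long, the configuration near $Q$ must be shown to be either spider-inducing or $3$-edge-colorable. This needs care with chords-like attachments $v_i,v_{i+2}$ and with how the leg lengths $2,3,k+1$ fit around a cycle of length between $5$ and $k+3$. If the direct spider argument fails for medium cycle lengths, I would fall back on bounding distances from $Q$ by $k$ regardless of $|Q|$. Then $|Q|$ long with everything at distance $\le 1$ gives a necklace structure like that of Theorem~\ref{thm::N222lin}. There I would check that the gadgets of types (a) and (d) cannot all appear, because the leg of length $k+1$ along the cycle plus a pendant leg of length $2$ through a gadget vertex produces the spider.
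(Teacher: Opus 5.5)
Your outline has the same architecture as the paper's proof: bound the distance from a shortest odd cycle $Q$, count to force $|Q|\ge k+5$, then only ears $v_ixv_{i+2}$ remain and are colored. However, two load-bearing steps fail as written.

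\textbf{Claim (b) for $|Q|=5$.} The proof breaks when $|Q|=5$. Legs of lengths $2$ and $3$ running along $Q$ from one centre need six distinct cycle vertices, so they fit only when $|Q|\ge 7$. Because you gave up the reductions, nothing replaces this, and without reductions the claim is actually false. For example, $C_5$ with a long pendant path at one vertex is $S_{2,3,k+1}$-free for every $k$, yet its path vertices lie arbitrarily far from $Q$. The paper obtains the missing leg from $\delta(G)\ge 3$, i.e.\ from the reductions: these supply the vertices $w_2$, or $w_3$ and $w_{p-1}$, in its proof that $N^{k+1}(Q)=\emptyset$. Your worry that reductions can push $n$ below the threshold is legitimate, and the paper does not address it either. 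But abandoning the reductions leaves the pentagon case with no argument at all. You need a way to use the degree conditions while controlling what the reductions delete.

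\textbf{The count.} Even granting (a) and (b), your count does not reach the stated bound. You get $|E(H)|\le\frac32|V(H)|\le\frac32|Q|2^k$, and this exceeds $(k+4)(2^k+1)+1$: for $k=3$ and $|Q|=7$ it gives $84>64$. Note that for odd $k$, Claim (a) only yields $|Q|\le k+4$, not $k+3$. Counting only the edges of $Q$ and of the breadth-first trees is no remedy, because every non-tree edge of $H$ is a vertex of $G$.

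What works is the paper's count, which is done in $G$, i.e.\ on edges of $H$. Classify edges by the distance of their nearer endpoint from $Q$:
\begin{itemize}
\item there are $|Q|$ cycle edges and at most $|Q|$ edges leaving $Q$;
\item for $1\le j\le k-1$, at most $2|N_j(Q)|\le 2^j|Q|$ edges have their nearer end in $N_j(Q)$;
\item no edge has both ends at distance $\ge k$.
\end{itemize}
The last point holds because such an edge would extend a shortest path to a path of length $k+1$ that leaves $Q$ at its first vertex, which gives the same spider when $|Q|\ge 7$. This is the root-graph form of $N^{k+1}(Q)=\emptyset$. Summing gives $n\le |Q|2^k$, hence $|Q|\ge k+5$.

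\textbf{The coloring in Claim (a).} It cannot come from Lemma~\ref{algorithm}. An ear $x$ on $v_i,v_{i+2}$ with $d(v_{i+1})=2$ closes the $4$-cycle $v_ixv_{i+2}v_{i+1}$, which rules out any Hamiltonian cycle. The paper instead colors the edges of $Q$ (odd length $\ge 7$) by $1,2,3,\dots$, prefixed by $1,2,1,3$ or $1,2,1,3,1,2,1,3$ according to $|Q|\bmod 3$. Then no four consecutive edges read $a,b,a,b$, so the missing colors at $v_i$ and $v_{i+2}$ differ, and every ear and pendant edge can be colored.

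\textbf{Connectivity.} Connectivity must be a hypothesis, not something handled component by component. $B_{10}$ together with a disjoint long path violates the statement as literally written.
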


\begin{proof}
Suppose $G$ contains no $K_4.$ Following previous discussions and reductions we may assume $3 \leq \delta(G) \leq \Delta(G) = 4.$
If $G$ is perfect, then $\chi(G) = \omega(G) \leq 3$ and so $G$ is $3$-colorable. Hence we may assume 
that $G$ is not perfect. Since $G$ is diamond-free, $G$ does not contain an induced odd antihole of size at least seven. 
Hence we may further assume that $G$ contains an odd hole $Q = v_1v_2 \ldots v_p$ of order $p \geq 5.$ Then $N^1(Q) \neq \emptyset$ 
since $\delta(G) \geq 3.$ If $w \in N^1(Q)$ then there are four consecutive vertices $v_i, v_{i+1}, v_{i+2}, v_{i+3}$ 
with $v_{i+1} v_{i+2} \in E(G)$ and $v_i v_{i+3} \notin E(G),$ since $G$ is $(claw,diamond)$-free. If 
$N_Q(w) = \{v_i, v_{i+1}, v_j, v_{j+1}\}$ for two integers with $1 \leq i < j \leq p$ and $|i-j| \geq 3,$ then there is 
a shorter odd hole, a contradiction. Therefore, $d_Q(w) = 2$ for every vertex $w \in N^1(Q).$ Since $d_{N^1}(w) \leq 2$ 
for every vertex $w \in N^1(Q)$ we obtain 
$$|N^1(Q)| \leq |Q| = p.$$ 
If $N_Q(w) = \{v_i, v_{i+1}\},$ then we label $w$ with $w_i$.

Moreover, since $G$ is $(claw, K_4)$-free, 
$$|N^{i+1}(Q)| \leq 2|N^i{Q}|$$
for all $i \geq 1.$ 

{Next we will show that}

{\begin{Claim}
$N^{k+1}(Q) = \emptyset$.
\end{Claim}}

\begin{proof}
Suppose $N^{k+1}(Q) \neq \emptyset.$ Assume that there is a path $w_1u_1u_2 \ldots u_{k+1}$ with $u_i \in N^{i+1}$ for $1 \leq i \leq k.$
Suppose first that $w_2$ exists, implying $w_2v_2, w_2v_3 \in E(G).$ Then $w_1w_2 \notin E(G)$ since $G$ is diamond-free. Since $G$ is claw-free, $u_1w_2 \notin E(G).$ But then $\{v_{p-1}, v_p, v_1, v_2, w_1, w_2, u_1, \ldots, u_k\}$ induces a $N_{1,2,k},$ a contradiction. 
So, suppose next that $w_2$ (and so by symmetry $w_p$) does not exist. Then $w_3, w_{p-1}$ exist. If $w_1w_3, u_1w_3 \notin E(G),$ then  
$\{v_p, v_1, v_2, v_3, w_1, w_3, u_1, \ldots, u_k\}$ induce a $N_{1,2,k},$ a contradiction. Hence we may assume that $w_1w_3, u_1w_3 \in E(G)$ since $G$ is claw-free. Now observe that $d(w_1)=4.$ Repeating our arguments we conclude that $w_{p-1}w_1, u_1w_{p-1} \notin E(G).$ 
But then $\{v_p, v_1, v_2,$ $ v_3, w_{p-1}, w_1, u_1, \ldots, u_k\}$ induces a $N_{1,2,k},$ a contradiction. 
\end{proof}

Therefore, $$|V(G)| \leq p+p+ \sum_{i=1}^{k-1}2^i = 2p + p \cdot \frac{2^k-1}{2-1} = p(2^k+1.)$$
Since $n > (k+4)(2^k+1)+1$ we conclude that $p \geq k+5.$ Now, if $N^2(Q) \neq \emptyset,$ then there is an induced $N_{1,2,k},$ a contradiction. Hence, we may assume that $N^2(Q) = \emptyset.$

Since $\delta(G) \geq 3$ it follows that $N(w_i) \cap N^1(Q) \neq \emptyset.$ Since $Q$ is a shortest odd hole we deduce that $N(w_i) \cap N^1(Q) \subset \{w_{i-2}, w_{i+2}\}.$ 
If $N(w_i) \cap N^1(Q) = \{w_{i-2}, w_{i+2}\},$ then there is a claw, a contradiction. Hence, we may assume that $G[N^1(Q)]$ is an induced matching.

Now, we $3$-color $Q$ as follows: If $p \equiv 0 (mod \ 3)$ then we color $v_1, v_2, v_3, \ldots$ alternatingly by $1,2,3, \ldots.$ 
If $p \equiv 1 (mod \ 3)$ then we color $v_1, v_2, v_3, \ldots$ by $1,2,1,3$ and then alternatingly by $1,2,3, \ldots.$ 
If $p \equiv 2 (mod \ 3)$ then we color $v_1, v_2, v_3, \ldots$ by $1,2,1,3,1,2,1,3$ and then alternatingly by $1,2,3, \ldots.$  Next, observe that $\{c(v_i),c(v_{i+1})\} \neq \{c(v_{i+2}),c(v_{i+3})\}$ for $1 \leq i \leq p.$ Therefore, we can extend the $3$-coloring of 
$Q$ with two suitable colors to a $3$-coloring of $Q$ for every edge $w_iw_{i+2}.$ So, we obtain a $3$-coloring of the whole graph~$G.$
\end{proof}

Thus, we can state the following corollary.

\begin{corollary}
    Let $G$ be a $(claw, diamond, N_{1,2,k})$-free graph. Then there is a polynomial algorithm deciding whether or not $G$ is $3$-colorable.
\end{corollary}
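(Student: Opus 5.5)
The statement to prove is the corollary immediately above: for $(claw, diamond, N_{1,2,k})$-free graphs, $3$-colorability can be decided in polynomial time. I would split on the number of vertices $n$, using the preceding theorem as the main tool, with the threshold $n_0 = (k+4)(2^k+1)+1$. The integer $k\ge 2$ is fixed for the class, so $n_0$ is a constant.

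\textbf{Large graphs.} Suppose $n > n_0$. By the preceding theorem, $G$ either contains $K_4$ or is $3$-colorable. So the algorithm only needs to test for a $K_4$, which brute force over all $4$-subsets does in $O(n^4)$ time. It answers ``not $3$-colorable'' if a $K_4$ is found and ``$3$-colorable'' otherwise.

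\textbf{Small graphs.} Suppose $n \le n_0$. Then $G$ has a bounded number of vertices, so trying all at most $3^{n_0}$ colorings takes constant time for fixed $k$.

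\textbf{Reductions in the theorem's proof.} That proof starts from ``following previous discussions and reductions we may assume $3\le\delta(G)\le\Delta(G)=4$.'' Read literally, the theorem is stated for $G$ itself, and in that case no reductions are needed in the algorithm. If instead one insists that the theorem applies only to reduced graphs, the algorithm must do more. It first disposes of $K_4$. Otherwise $G=L(H)$ with $H$ subcubic and triangle-free by Theorem~\ref{linegraphs}, and $H$ can be recovered from $G$ in polynomial time. The algorithm then applies Reductions 1--4 exhaustively to $H$. Each reduction deletes vertices, preserves $3$-edge-colorability, and is applied at most $n$ times. It then splits the result into components (Reduction 4 handles cut-edges) and runs the case analysis above on each component separately. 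Taking line graphs of induced subgraphs of $H$ gives induced subgraphs of $G$, so every reduced component stays $(claw, diamond, N_{1,2,k})$-free and the theorem applies to it.

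\textbf{Main obstacle.} The delicate point is making sure the reductions really preserve both the hypotheses of the theorem and the answer to the colorability question. In particular, cut-edge reduction must combine colorings of the pieces, which requires permuting colors so the two sides agree at the cut. Once that is checked, the total running time is polynomial: $O(n^4)$ plus a constant depending only on $k$.
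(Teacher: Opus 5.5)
Your proposal is correct and takes essentially the same approach as the paper, which derives the corollary directly from the preceding theorem. Above the threshold $n_0=(k+4)(2^k+1)+1$ the question reduces to detecting a $K_4$, and below it brute force runs in constant time for fixed $k$. Your reduction-based fallback is a sound extra safeguard, since the theorem's proof tacitly passes to reduced graphs (which shrinks $n$); the paper itself simply takes the theorem's statement at face value.
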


Let us recall that the non-$3$-colorable, infinite families of graphs $B_{9i+1}$ and $B_{9i+7}$ are $(claw, diamond, N_{2, 2, 2})$-free. Thus, we can state the following.

\begin{corollary}
    Let $2 \leq i \leq j \leq k$. Then there exist infinitely many non-$3$-colorable, $(claw, diamond, N_{i, j, k})$-free graphs not containing the complete graph $K_4$.
\end{corollary}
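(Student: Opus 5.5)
The corollary asks for infinitely many non-$3$-colorable, $K_4$-free, $(claw, diamond, N_{i,j,k})$-free graphs whenever $2 \le i \le j \le k$. My plan is to take the family $B_{9m+1} = L(D_{6m+1})$, $m \ge 1$, already constructed before Theorem \ref{thm::N222}, and check three properties.

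\emph{Non-colorability and $K_4$-freeness.} Each $D_{6m+1}$ is a chain of $m$ copies of $K_{3,3}$ with one edge subdivided. It has an odd number $6m+1$ of vertices and $9m+1$ edges, while its matching number is at most $3m$. Since $9m+1 > 3\cdot 3m$, it is overfull, so Observation \ref{easyfact} gives $\chi'(D_{6m+1}) = 4$. Hence $B_{9m+1}$ is not $3$-colorable, and these graphs are pairwise non-isomorphic. Moreover $D_{6m+1}$ is subcubic and triangle-free (bipartite pieces joined by a subdivided edge), so Theorem \ref{linegraphs} shows that $B_{9m+1}$ is $(K_4, claw, diamond)$-free.

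\emph{Monotonicity in $(i,j,k)$.} Because $i,j,k \ge 2$, the graph $N_{2,2,2}$ is an induced subgraph of $N_{i,j,k}$: keep the triangle and the first two vertices of each pendant path. So any $N_{2,2,2}$-free graph is $N_{i,j,k}$-free, and it suffices to show that $B_{9m+1}$ is $N_{2,2,2}$-free.

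\emph{$N_{2,2,2}$-freeness, the main step.} In a line graph of a triangle-free graph, an induced $N_{2,2,2}$ corresponds to a subgraph $S_{3,3,3}$ of the root graph: the triangle is three edges at a common vertex, and the pendant paths extend them. So I must show that $D_{6m+1}$ contains no (not necessarily induced) $S_{3,3,3}$, i.e.\ no degree-$3$ vertex with three internally disjoint paths of length $3$ leaving it.

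\begin{itemize}
\item Take a centre $x$ inside a copy of $K_{3,3}$ whose link vertices to the rest of the chain are $a$ and $b$. Each copy is attached to the rest of the graph by only two edges.
\item So at most two of the three legs can leave the copy, one through $a$ and one through $b$. The third leg, of length $3$ with $4$ vertices, must stay inside the copy.
\item The copy has $6$ vertices. The centre, the third leg and the starts of the other two legs must all be disjoint, and the two exiting legs must use $a$ and $b$ within their first few vertices.
\end{itemize}

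A short count should show this overlaps. If it does not, I would instead exhibit the $S_{3,3,3}$-freeness by the argument already used in Theorem \ref{thm::N222lin}, whose proof lists the $D$-families as the only exceptions for $S_{3,3,3}$-free graphs. Given that, it suffices to verify that the necklace of gadgets $(a)$ defining $D_{6m+1}$ meets the hypotheses there. Concretely, $D_{6m+1}$ is an odd cycle $Q$ with $N(Q)$ $P_3$-free and $N_2(Q) = \emptyset$, so a vertex off $Q$ cannot start a long leg. This is where I expect the care to be needed; the paper asserts it ("Let us recall…"), so the corollary then follows immediately.
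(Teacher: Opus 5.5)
Your route is the paper's. The paper simply recalls, without proof, that $B_{9i+1}$ and $B_{9i+7}$ are non-$3$-colourable and $(claw, diamond, N_{2,2,2})$-free, and then uses that $N_{2,2,2}$ is an induced subgraph of $N_{i,j,k}$. Your supporting steps are correct: overfullness of $D_{6m+1}$, Theorem~\ref{linegraphs}, the induced embedding $N_{2,2,2}\subseteq N_{i,j,k}$, and the translation of an induced $N_{2,2,2}$ in $L(H)$ into an $S_{3,3,3}$-subgraph of $H$.

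\textbf{The gap.} The one substantive step, that $D_{6m+1}$ contains no $S_{3,3,3}$, is not proved. The count you sketch does not overlap. The centre, the three further vertices of the internal leg, and the first vertices of the two exiting legs are $1+3+1+1=6$ vertices, which is exactly the size of a copy. Moreover, when the centre is itself a link vertex, one leg leaves at once and only $5$ copy vertices are forced. Your fallback is circular. Theorem~\ref{thm::N222lin} \emph{assumes} $S_{3,3,3}$-freeness and concludes $H\cong D_{6i+1}$, $H\cong D_{6i+5}$ or $\chi'(H)=3$; it says nothing about the converse. Likewise, ``$N_2(Q)=\emptyset$ and $N(Q)$ is $P_3$-free'' are consequences of $S_{3,3,3}$-freeness derived in that proof, not sufficient conditions for it. A centre on $Q$ can send two legs along $Q$ and the third into $N(Q)$ and back onto $Q$.

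\textbf{The missing idea: where the link vertices sit.} In a copy $K_{3,3}-e$, the only vertices of degree $2$ are the ends of $e$. By subcubicity these are the link vertices $a,b$. They are non-adjacent and lie in opposite colour classes, so they have no common neighbour and are at distance $3$ inside the copy.

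\emph{Case $x\notin\{a,b\}$.} Two internal legs would need $1+3+3>6$ copy vertices. Hence exactly two legs exit and the count above is tight. So each exiting leg has a single copy vertex, necessarily $a$ and $b$ respectively. This makes $x$ a common neighbour of $a$ and $b$, a contradiction.

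\emph{Case $x=a$.} One leg starts outside. At most one of the other two can exit, and only through $b$. That leg needs $3$ copy vertices just to reach $b$, and the internal leg also needs $3$. Since $3+3>5$, this is impossible.

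Add three small facts: the subdivision vertex has degree $2$ and cannot be a centre; for $m\ge 2$ a leg of length $3$ cannot leave a copy and re-enter it; and $|V(D_7)|=7<10$. With these, the argument closes and your proof of the corollary is complete.
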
}

\begin{corollary}
    Let $G$ be a $(claw, diamond, N_{1,2,k})$-free graph with $2 \leq k$. Then there are only finitely many non-$3$-colorable graphs not containing $K_4$.
\end{corollary}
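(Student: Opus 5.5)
This is a direct consequence of the preceding theorem on $(claw, diamond, N_{1,2,k})$-free graphs, so the plan is mainly to make the counting explicit and to handle the reductions carefully. Fix $k \geq 2$ and let $G$ be a $(claw, diamond, N_{1,2,k})$-free graph that is $K_4$-free and not $3$-colorable. The goal is to bound $|V(G)|$ by a function of $k$ alone. Finitely many graphs up to isomorphism then remain, so only finitely many such graphs exist.

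\textbf{Step 1: reduce to a vertex-critical graph.} Non-$3$-colorability is inherited by some $4$-vertex-critical induced subgraph $G'$ of $G$. Since induced subgraphs of a $(claw, diamond, N_{1,2,k}, K_4)$-free graph stay in the class, $G'$ lies in the same class. A critical graph is connected and has minimum degree at least $3$. It also has no clique cutset, since otherwise its $3$-colorings of the pieces could be glued by permuting colors.

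\textbf{Step 2: bound the critical graph.} The previous theorem, with the assumption $3 \leq \delta \leq \Delta = 4$ established in its proof, applies directly to $G'$. Because $G'$ contains no $K_4$ and is not $3$-colorable, we get $|V(G')| \leq (k+4)(2^k+1)+1$.

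\textbf{Step 3: pass from $G'$ back to $G$.} This is where I expect the main obstacle: the corollary as literally stated concerns all of $G$, not only its critical core. A vertex-minimality reading, where one counts minimal non-$3$-colorable graphs, is immediate from Step 2. For a literal reading I would instead apply the Reductions 1--4 and the reductions used in the proof of the previous theorem to $G$ itself. These reductions delete low-degree vertices and split along cut vertices without changing $3$-colorability.

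\textbf{Step 4: check that the bound is intrinsic to the structure.} I would verify that the bound $p(2^k+1)$ on the order comes from the structural analysis of $G$ around a shortest odd hole $Q$, and that large order forces $p \geq k+5$ and hence $3$-colorability. The statement is then understood up to these reductions, exactly as the theorems of Section 2 are stated for reduced graphs. Under that reading, every non-$3$-colorable $K_4$-free member of the class reduces to a graph of order at most $(k+4)(2^k+1)+1$, and there are finitely many of these.
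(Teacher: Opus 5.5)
Your Steps 1--2 follow the paper's route: the corollary is read off the order bound $(k+4)(2^k+1)+1$ of the preceding theorem. Passing to a $4$-vertex-critical induced subgraph $G'$ is the right way to justify the hypothesis $3\le\delta\le\Delta\le 4$, which the theorem's proof only assumes "by reductions". For $G'$ this hypothesis genuinely holds, so you do obtain a correct bound on $|V(G')|$.

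The gap is in Steps 3--4. Applying Reductions 1--4, or deleting low-degree vertices, to $G$ bounds only the reduced graph, never $|V(G)|$. Nor does the structural bound $|V|\le p(2^k+1)$ from the theorem's proof transfer to an unreduced $G$. It relies on $\delta\ge 3$: in the Claim, that hypothesis is what forces $w_3$ and $w_{p-1}$ to exist once $w_2$ does not.

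This gap cannot be closed, because the statement read literally is false. Take $B_{10}=L(K_{3,3}^*)$. It is claw-, diamond- and $K_4$-free and not $3$-colorable. It is also $N_{1,2,k}$-free for every $k\ge 2$: an induced $N_{1,2,k}=L(S_{2,3,k+1})$ in $L(H)$, with $H$ triangle-free, would come from a subgraph $S_{2,3,k+1}$ of $H$ on $k+7\ge 9$ vertices, while $K_{3,3}^*$ has only $7$. Hence $B_{10}$ together with any number of isolated vertices, or with a disjoint long path, gives infinitely many non-$3$-colorable $K_4$-free graphs in the class. The same example shows the preceding theorem is false with $n=|V(G)|$, so the paper's one-line derivation carries the same hidden flaw.

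So do not present Step 3 as a route to the literal reading. What your argument actually proves is the following. There are only finitely many $4$-vertex-critical (equivalently, minimal non-$3$-colorable) $K_4$-free $(claw, diamond, N_{1,2,k})$-free graphs. Every non-$3$-colorable $K_4$-free graph in the class contains one of them as an induced subgraph. State that as the result.

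If you want a finiteness statement about connected $G$ itself, you need an extra argument that all of $G$ lies within bounded distance of the critical core. This would use $\Delta(G)\le 4$ and $N_{1,2,k}$-freeness, and neither your proposal nor the paper supplies it.
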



\bibliographystyle{abbrv}
\bibliography{main}

\end{document}